\newtheorem{thm}{Theorem}[section]
\newtheorem{lem}[thm]{Lemma}
\newtheorem{cor}[thm]{Corollary}
\newtheorem{ex}[thm]{Example}
\theoremstyle{definition}
\theoremstyle{remark}
\newtheorem{rem}[thm]{Remark}
\numberwithin{equation}{section}
\def \U {\mathcal{U}}
\def \F {\mathcal{F}}
\def \p1 {\pi _1}
\def \p {\mathbb{P}}
\begin{document}
\title{topological pressure for sub-additive potentials of
amenable group actions}

\date{Apr. 3, 2011}

\author[B. Liang]{Bingbing Liang}
%    Address of record for the research reported here
\address{Department of Mathematics, University of Science and Technology of
China, Hefei Anhui 230026, P. R. China}
\email{bbliang@mail.ustc.edu.cn}
%  \thanks will become a 1st page footnote.

\author[K. Yan]{Kesong Yan}
%    Address of record for the research reported here
\address{Department of Mathematics, University of science and Technology of
China, Hefei Anhui 230026, P. R. China, and Department of
Mathematics and Computer Science, Liuzhou Teachers College, Liuzhou,
Guangxi, 545004, P.R China} \email{ksyan@mail.ustc.edu.cn}

%\thanks{The second author is partially supported by
%NSFC (10531010), 973 programm and Ministry of Education
%(20050358053); The third author is supported by a grant from Hefei
%University of Technology(GDBJ2008-024)}

%\subjclass[2000]{Primary: 37B40, 37B05, 37B10, 54H20.}

%\keywords{sequence entropy, maximal pattern entropy}

\begin{abstract}
The topological pressure for any sub-additive potentials of a
countable discrete amenable group action and any given open cover is
defined. A local variational principle for the topological pressure
is established.
\end{abstract}

\maketitle

%\tableofcontents

\section{Introduction and main result}

Entropies are fundamental to our current understanding of dynamical
systems. The classical measure-theoretic entropy for an invariant
measure and the topological entropy were introduced in \cite{Kol}
and \cite{AKM} respectively, and the classical variational principle
was completed in \cite{Goo69, Goo71}. Since then a subject involving
to define new measure-theoretic and topological notations of entropy
and study the relationship between them has gained a lot of
attention in the study of dynamical systems.

Topological pressure is a generalization of topological entropy for
a dynamical system. The notion was first introduced by Ruelle
\cite{Rue} in 1973 for an expansive dynamical system and later by
Walters \cite{Wal75} for the general case. The variational principle
formulated by Walters can be stated precisely as follows: Let $(X,
T)$ be a topological dynamical system, where $X$ is a compact metric
space and $T: X \rightarrow X$ is a continuous map, and $f: X
\rightarrow {\mathbb R}$ is a continuous function. Let $P(T, f)$
denote the topological pressure of $f$ (see \cite{Wal}). Then
\begin{equation} \label{eq:1-1}
P(T, f)=\sup \left\{h_{\mu}(T) + \int \, f \, \mathrm{d}\mu: \mu \in
{\mathcal M}(X, T)\right\},
 \end{equation}
where $\mathcal{M}(X, T)$ denotes the spaces of all $T$-invariant
Borel probability measures on $X$ and $h_{\mu}(T)$ denotes the
measure-theoretic entropy of $\mu$.

The theory related to the topological pressure, variational
principle and equilibrium states plays a fundamental role in
statistical mechanics, ergodic theory and dynamical systems (see,
e.g., the books \cite{Bow, Kel, Rue2, Wal}). Since the works of
Bowen \cite{Bow2} and Ruelle \cite{Rue3}, the topological pressure
has become a basic tool in the dimension theory related to dynamical
systems. In 1984, Pesin and Pitskel \cite{PP} defined the
topological pressure of additive potentials for non-compact subsets
of compact metric spaces and proved the variational principle under
some supplementary conditions. In 1988, Falconer \cite{Fal}
considered the thermodynamic formalism for sub-additive potentials
for mixing repellers. He proved the variational principle for the
topological pressure under some Lipschitz conditions and bounded
distortion assumptions on the sub-additive potentials. In 1996,
Barreira \cite{Bar} extended the work of Pesin and Pitskel. He
defined the topological pressure for an arbitrary sequence of
continuous functions on an arbitrary subset of compact metric
spaces, and proved the variational principle under a strong
convergence assumption on the potentials. In 2008, Y. Cao, D. Feng
and W. Huang \cite{CFH} generalized Ruelle and Walters's results to
sub-additive potentials in general compact dynamical systems.

Since notions of entropy pairs were introduced in both topological
\cite{Bla93} and measure-theoretic system \cite{BHM}, much attention
has been paid to the study the local variational principle of
entropy. Recently, Kerr and Li introduced various notions of
independence and give a uniform treatment of entropy pairs and
sequence entropy pairs \cite{KL, KL1}. An overview of local entropy
theory can see the survey paper \cite{GY}. In 2007, to study the
local variational principle of topological pressure, W. Huang and Y.
Yi \cite{HYi} introduced a new definition of topological pressure
for open covers. They proved a local variational principle for
topological pressure for any given open cover.

In this paper, we generalize Huang-Yi's results to dynamical systems
acting by a countable discrete amenable group. Let $(X, G)$ be an
amenable group action dynamical system. We define the local
topological pressure for sub-additive potentials
$\mathcal{F}=\{f_E\}_{E \in {\mathcal F}(G)}$ and set up a local
variational principle between the topological pressure and
measure-theoretical entropies.

Now we formulate our results.  Throughout the paper, we let $(X, G)$
be a $G$-system, where $G$ is a countable discrete amenable group
and $X$ is a compact metric space. A {\bf sub-additive potential on
$(X, G)$} is a collection ${\mathcal F}=\{f_E\}_{E \in {\mathcal
F}(G)}$ of continuous real valued function on $X$ satisfies the
following conditions:

\vspace{1mm}

{\bf (C1)}\ $f_{E \cup F}(x) \leq f_E(x) + f_F(x)$ for all $x \in X$
and all $E \cap F =\emptyset, E, F \in {\mathcal F}(G)$;

\vspace{1mm}

{\bf (C2)}\ $f_{Eg}(x)=f_E(gx)$ for all $x \in X$, $g \in G$ and $E
\in {\mathcal F}(G)$;

\vspace{1mm}

{\bf (C3)}\ $C=\sup \limits_{E\in F(X)}\sup\limits_{x\in X,g\in
G}(f_E(x)-f_{E \cup \{g\}} (x))<\infty$.
\vspace{2mm}
\\
As a main result, we obtain the following local variational
principle.

\begin{thm}
{\bf (Local variational principle)} Let $(X, G)$ be a $G$-system,
$\mathcal{U}\in \mathcal{C}_X^o$ and ${\mathcal F}=\{f_E\}_{E \in
{\mathcal F}(G)}$ a sub-additive potential on $(X, G)$. Then

\begin{equation} \label{eq:main}
P(G,\mathcal {F};\mathcal {U})=\sup_{\mu\in
\mathcal{M}(X,G)}\left\{h_\mu(G,\mathcal{U})+\mathcal{F}_*(\mu)
\right\}
 \end{equation}
and the supremum can be attained in $\mathcal{M}^e(X, G)$, if one of
the following conditions holds:

{\rm (1)} $G$ is an Abelian group;

{\rm (2)} ${\mathcal F}$ is strongly sub-additive, i.e. $f_{E\cup
F}+f_{E \cap F}\leq f_E+f_F$ for all $E,F \in F(G)$.
\end{thm}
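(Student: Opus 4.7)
The plan is to establish the two inequalities in (\ref{eq:main}) separately, and then upgrade the supremum to be attained on $\mathcal{M}^e(X,G)$ under either hypothesis (1) or (2).

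For the lower bound $P(G, \mathcal{F}; \mathcal{U}) \geq h_\mu(G, \mathcal{U}) + \mathcal{F}_*(\mu)$, I would fix $\mu \in \mathcal{M}(X,G)$ and work with a finite Borel partition $\alpha$ finer than $\mathcal{U}$. The sub-additive ergodic theorem of Lindenstrauss for countable amenable group actions, applied to $\{f_E\}$ along a tempered Fölner sequence $\{F_n\}$, yields $\frac{1}{|F_n|} f_{F_n}(x) \to \bar f(x)$ $\mu$-a.e.\ with $\int \bar f\, d\mu = \mathcal{F}_*(\mu)$. Combining this with the classical partition inequality
\begin{equation*}
\sum_{A \in \alpha^{F_n}} \sup_{x \in A} e^{f_{F_n}(x)} \;\geq\; \exp\!\Bigl(H_\mu(\alpha^{F_n}) + \textstyle\int f_{F_n}\, d\mu \Bigr),
\end{equation*}
dividing by $|F_n|$, and letting $n \to \infty$ produces the inequality, since $\alpha \succ \mathcal{U}$ implies $H_\mu(\alpha^{F_n}) \geq H_\mu(\mathcal{U}^{F_n})$ and the left-hand side is controlled by the pressure of $\mathcal{U}$.

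For the upper bound, the strategy is a Misiurewicz-type construction adapted to amenable groups. Fix a Fölner sequence $\{F_n\}$ and a small $\varepsilon > 0$. By the definition of $P(G, \mathcal{F}; \mathcal{U})$, for each large $n$ there is a finite set $E_n \subset X$ meeting each atom of a subcover of $\mathcal{U}^{F_n}$ of (almost) minimal weighted cardinality with
\begin{equation*}
\sum_{x \in E_n} e^{f_{F_n}(x)} \;\geq\; \exp\!\bigl(|F_n|(P(G, \mathcal{F}; \mathcal{U}) - \varepsilon)\bigr).
\end{equation*}
Form the probability measures $\sigma_n = Z_n^{-1} \sum_{x \in E_n} e^{f_{F_n}(x)} \delta_x$ and the Fölner averages $\mu_n = \frac{1}{|F_n|} \sum_{g \in F_n} g_*\sigma_n$. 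Pass to a weak-$\ast$ limit $\mu_{n_k} \to \mu$; a standard estimate using the Fölner property of $\{F_n\}$ shows $\mu \in \mathcal{M}(X,G)$. Next choose a partition $\beta$ subordinate to $\mathcal{U}$ whose cell boundaries are $\mu$-null, apply the combinatorial lemma $\log \sum_{x \in E_n} e^{f_{F_n}(x)} \leq H_{\sigma_n}(\beta^{F_n}) + \int f_{F_n}\, d\sigma_n$, and use the Ornstein--Weiss quasi-tiling of $F_n$ by translates of elements from a fixed $\varepsilon$-disjoint family, together with sub-additivity of Shannon entropy, to bound $\frac{1}{|F_n|} H_{\sigma_n}(\beta^{F_n})$ by $h_{\mu_n}(G, \mathcal{U}) + o(1)$. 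The zero-boundary choice of $\beta$ forces upper semi-continuity of $h_{(\cdot)}(G, \mathcal{U})$ at $\mu$, while condition (C3) allows passing to the limit in $\frac{1}{|F_n|} \int f_{F_n}\, d\sigma_n \to \mathcal{F}_*(\mu)$. Together these give $P(G, \mathcal{F}; \mathcal{U}) - \varepsilon \leq h_\mu(G, \mathcal{U}) + \mathcal{F}_*(\mu)$.

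For the attainment in $\mathcal{M}^e(X,G)$, use the ergodic decomposition $\mu = \int_\Omega \mu_\omega\, d\tau(\omega)$. The local entropy functional is affine on $\mathcal{M}(X,G)$, so $h_\mu(G,\mathcal{U}) = \int h_{\mu_\omega}(G, \mathcal{U})\, d\tau$. One then needs $\mathcal{F}_*(\mu) \leq \int \mathcal{F}_*(\mu_\omega)\, d\tau$, after which a selection argument picks $\omega$ with $h_{\mu_\omega}(G,\mathcal{U}) + \mathcal{F}_*(\mu_\omega) \geq h_\mu(G,\mathcal{U}) + \mathcal{F}_*(\mu)$. Under hypothesis (2), the strong sub-additivity makes $\frac{1}{|F_n|}\int f_{F_n}\, d\nu$ a decreasing net whose limit is affine in $\nu$, so Lebesgue/monotone convergence gives equality in the ergodic decomposition. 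Under hypothesis (1), commutativity lets $\mathcal{F}_*$ be expressed as an infimum over $G$-invariant averages, which is concave and affine on $\mathcal{M}(X,G)$, and Fatou's lemma yields the inequality. The main obstacle is precisely this last step: without (1) or (2) one has no good handle on how $\mathcal{F}_*$ interacts with the ergodic decomposition, while the body of the upper-bound proof also requires care in replacing the elementary additive Bowen-ball combinatorics by Ornstein--Weiss quasi-tilings.
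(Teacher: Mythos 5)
Your outline reproduces the global Misiurewicz argument, but it breaks down at exactly the point where the \emph{local} variational principle requires a new idea. In the upper bound you fix a single partition $\beta\succeq\mathcal{U}$ with $\mu$-null cell boundaries; this yields at best $P(G,\mathcal{F};\mathcal{U})\le h_\mu(G,\beta)+\mathcal{F}_*(\mu)$. Since $h_\mu(G,\mathcal{U})=h_\mu^{+}(G,\mathcal{U})=\inf_{\alpha\succeq\mathcal{U},\,\alpha\in\mathcal{P}_X}h_\mu(G,\alpha)$, the quantity $h_\mu(G,\beta)$ sits on the wrong side of this infimum and may strictly exceed $h_\mu(G,\mathcal{U})$, so your conclusion is weaker than the theorem. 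The paper's resolution, which is absent from your proposal, is two-fold: first reduce to a zero-dimensional extension $\pi:(Z,G)\to(X,G)$ (Step 3, with Lemma 3.4 transporting both $P(G,\mathcal{F};\mathcal{U})$ and $h_\mu(G,\mathcal{U})$ through $\pi^{-1}$), so that the clopen partitions in $\mathcal{U}^*$ finer than $\mathcal{U}$ form a countable family $\{\alpha_l\}_{l\in\mathbb{N}}$; then invoke the key combinatorial Lemma 3.3 (Huang--Yi's Lemma 4.4), which for each $n$ produces one finite set $B_n$ with $\sum_{x\in B_n}e^{f_{F_n}(x)}\ge P_{F_n}(G,\mathcal{F};\mathcal{U})/n$ that is \emph{simultaneously} separated with respect to $(\alpha_l)_{F_n}$ for all $l\le n$; the factor $n$ costs only $\log n/|F_n|\to 0$, and one then obtains \eqref{eq:8} for every $l$, hence for the infimum defining $h_\mu^{+}(G,\mathcal{U})$. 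Also note that the paper never needs quasi-tilings or zero-boundary partitions for the entropy term: since $E\mapsto H_{\nu_n}((\alpha_l)_E)$ is strongly sub-additive, Lemma 3.2(2) plus concavity of $H_{\cdot}$ gives \eqref{eq:11} directly, and clopenness of $\alpha_l$ handles the weak$^*$ passage $H_{\mu_n}((\alpha_l)_B)\to H_\mu((\alpha_l)_B)$.

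You have also misplaced where hypotheses (1) and (2) act. Condition (C3) does \emph{not} justify $\frac{1}{|F_n|}\int f_{F_n}\,\mathrm{d}\nu_n\to\mathcal{F}_*(\mu)$: the measures $\nu_n$ are not invariant and $f_{F_n}$ is only sub-additive, and relating this term to the limit measure $\mu$ is precisely where the dichotomy enters. In Case (1) the paper uses a tiling F{\o}lner sequence (available since $G$ is Abelian) and commutativity in Lemma 3.6 to prove $\limsup_i\frac{1}{|F_{n_i}|}\int f_{F_{n_i}}\,\mathrm{d}\nu_{n_i}\le\mathcal{F}_*(\mu)$; in Case (2) it applies Lemma 3.2(2) to the strongly sub-additive function $E\mapsto\int f_E\,\mathrm{d}\nu_n$, obtaining \eqref{eq:13}, and then uses Remark 2.2(1), i.e.\ $\mathcal{F}_*(\mu)=\inf_{B\in\mathcal{F}(G)}\frac{1}{|B|}\int f_B\,\mathrm{d}\mu$. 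Conversely, the ergodic-decomposition step you flag as the main obstacle needs neither hypothesis: by (C3) one has $\sup\{|f_{F_n}(x)|/|F_n|\}\le K<\infty$, and $\frac{1}{|F_n|}\int f_{F_n}\,\mathrm{d}\theta$ converges for every invariant $\theta$, so dominated convergence gives $\mathcal{F}_*(\mu)=\int_{\mathcal{M}^e(X,G)}\mathcal{F}_*(\theta)\,\mathrm{d}m(\theta)$ unconditionally (equation \eqref{eq:14}), and attainment follows from Lemma 2.6. Finally, in the lower bound your appeal to a pointwise sub-additive ergodic theorem is both unnecessary and unsupported at this generality: only $\frac{1}{|F_n|}\int f_{F_n}\,\mathrm{d}\mu\to\mathcal{F}_*(\mu)$ is used, which follows from Ornstein--Weiss (Lemma 2.1) applied to $E\mapsto\int f_E\,\mathrm{d}\mu+C|E|$.
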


\vspace{4mm}

In particular, if $f$ is a continuous function on $X$, then
$$\mathcal{F}=\left\{f_E=\sum_{g \in E}f \circ g: E \in {\mathcal F}(G)\right\}$$
satisfies the condition (2) in Theorem 1.1. In this case, write
$P(G,f;\mathcal {U})=P(G,\mathcal {F};\mathcal {U})$, we can get

\begin{cor}
Let $(X, G)$ be a $G$-system, $\mathcal{U}\in \mathcal{C}_X^o$ and
$f \in C(X)$, then
$$P(G,f;\mathcal {U})=\sup_{\mu\in \mathcal{M}(X,G)}
\left\{h_\mu(G,\mathcal{U})+ \int_X \, f \, \mathrm{d}\mu\right\}$$
and the supremum can be attained in $\mathcal{M}^e(X, G)$.
\end{cor}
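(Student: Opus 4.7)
The plan is to show that Corollary 1.2 is a direct application of Theorem 1.1, with essentially no extra work beyond verifying that the additive potential built from a single continuous function $f$ falls under case (2), and identifying the functional $\mathcal{F}_*$ with integration against $f$.

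First I would verify that $\mathcal{F}=\{f_E=\sum_{g\in E} f\circ g\}_{E\in\mathcal{F}(G)}$ is a sub-additive potential in the sense of the paper. Condition (C1) is immediate with equality on disjoint sets: for $E\cap F=\emptyset$, $f_{E\cup F}(x)=\sum_{g\in E\cup F}f(gx)=f_E(x)+f_F(x)$. For (C2), $f_{Eg}(x)=\sum_{h\in Eg} f(hx)=\sum_{h'\in E}f(h'gx)=f_E(gx)$, using the right-multiplication reindexing $h=h'g$. For (C3), since $f_{E\cup\{g\}}(x)-f_E(x)$ is either $0$ (when $g\in E$) or $f(gx)$ (when $g\notin E$), the supremum of $f_E(x)-f_{E\cup\{g\}}(x)$ is bounded by $\|f\|_\infty<\infty$. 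To put $\mathcal{F}$ under hypothesis (2) of Theorem 1.1, I would compute $f_{E\cup F}+f_{E\cap F}=\sum_{g\in E\cup F}f\circ g+\sum_{g\in E\cap F}f\circ g=\sum_{g\in E}f\circ g+\sum_{g\in F}f\circ g=f_E+f_F$; equality of course implies the required inequality, so $\mathcal{F}$ is strongly sub-additive.

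Next I would identify the functional $\mathcal{F}_*(\mu)$ appearing on the right-hand side of \eqref{eq:main}. Following the standard convention for sub-additive potentials over an amenable group, $\mathcal{F}_*(\mu)=\lim_{n\to\infty}\frac{1}{|F_n|}\int f_{F_n}\,\mathrm{d}\mu$ along any Følner sequence $\{F_n\}$ (for invariant $\mu$, the limit exists by subadditivity and does not depend on the sequence). In the present additive situation this simplifies directly: by $G$-invariance of $\mu$,
\[
\frac{1}{|F_n|}\int f_{F_n}\,\mathrm{d}\mu=\frac{1}{|F_n|}\sum_{g\in F_n}\int f\circ g\,\mathrm{d}\mu=\int f\,\mathrm{d}\mu,
\]
so $\mathcal{F}_*(\mu)=\int_X f\,\mathrm{d}\mu$ for every $\mu\in\mathcal{M}(X,G)$.

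Combining these two observations, Theorem 1.1 applied under its case (2) yields
\[
P(G,f;\mathcal{U})=P(G,\mathcal{F};\mathcal{U})=\sup_{\mu\in\mathcal{M}(X,G)}\Bigl\{h_\mu(G,\mathcal{U})+\int_X f\,\mathrm{d}\mu\Bigr\},
\]
with the supremum attained at some $\mu\in\mathcal{M}^e(X,G)$, as asserted. There is no genuine obstacle here: the only points that need care are a clean verification of the strong sub-additivity (actually equality) and the evaluation of $\mathcal{F}_*(\mu)$, both of which are straightforward once the definitions are unpacked. The substantive work has already been done in Theorem 1.1.
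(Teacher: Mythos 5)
Your proposal is correct and takes essentially the same route as the paper: the paper derives the corollary in exactly this way, by noting that $\mathcal{F}=\{f_E=\sum_{g\in E}f\circ g\}_{E\in\mathcal{F}(G)}$ is strongly sub-additive (so case (2) of Theorem 1.1 applies) and identifying $\mathcal{F}_*(\mu)=\int_X f\,\mathrm{d}\mu$ via $G$-invariance. You simply spell out the verifications of (C1)--(C3) and of strong sub-additivity that the paper leaves implicit, all of which are accurate.
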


\vspace{4mm}

The paper is organized as follows: in section 2, we recall some
knowledge about amenable group, and the definition and basic
properties of local measure-theoretic entropy for amenable group
action. Moreover, we introduce the local pressure for a sub-additive
potential. In section 3, we provide some useful lemmas and prove
Theorem 1.1. In section 4, we give a nontrivial example of
sub-additive potential.

\section{Pressure of an Amenable Group Action}

\subsection{Backgrounds of a countable discrete amenable group} Let
$G$ be a countable discrete infinite group and $\mathcal{F}(G)$ the
set of all finite non-empty subsets of $G$. A {\bf tile} $T
\subseteq G$ is a finite subset that has a collection of right
translates that partitions $G$, i.e., there is a set $C \subseteq G$
of {\bf tiling centers} such that $\{Tc: c \in C\}$ form a disjoint
family whose union $TC$ is all of $G$. Note that $T \in
\mathcal{F}(G)$ is a tile of $G$ if and only if any $A \in
\mathcal{F}(G)$ can be covered by disjoint right translates of $T$.

A group $G$ is said to be {\bf amenable} if for all $\epsilon>0$ and
all $K \in \mathcal{F}(G)$, there exists $F \in \mathcal{F}(G)$ such
that
$$\frac{\left|F \bigtriangleup KF\right|}{\left|F\right|}<\epsilon.$$
Observe that a countable group is amenable if and only if there is a
sequence $\{F_n\}_{n \in {\mathbb N}} \subseteq \mathcal{F}(G)$ such
that
$$
\lim_{n \rightarrow \infty}\frac{\left|KF_n \bigtriangleup
F_n\right|}{\left|F_n\right|}=0
$$
for all $K \in \mathcal{F}(G)$. Such a sequence is called a {\bf
F$\phi$lner sequence} of $G$ (see \cite{E-F}). For a more complete
description of this class of groups see, for example, \cite{Gre} or
\cite{Pat}.

It is well known that the class of amenable groups contains all
finite groups, Abelian groups, it is closed by taking subgroups,
quotients, extensions and inductive limits. All finitely generated
groups of subexponential growth are amenable. A basic example of a
nonamenable group is the free group of rank 2.

Cyclic groups have F$\phi$lner sequences of tiling sets, and it can
build up form them to show that all solvable groups, finite
extensions thereof, increasing unions, etc., in brief the so-called
class of {\bf elementary amenable groups}, all have F$\phi$lner
sequences of tiling sets. In particular, all Abelian group have
tiling F$\phi$lner sequences. Unfortunately it is an open problem
that whether all countable discrete amenable groups have F$\phi$lner
sequences of tiling sets \cite{OW}.

Let $f: \mathcal{F}(G) \rightarrow {\mathbb R}$ be a function. We
say that $f$ is

(i) {\bf monotone}, if $f(E) \leq f(F)$ for any $E, F \in
\mathcal{F}(G)$ with $E \subseteq F$;

(ii) {\bf non-negative}, if $f(F) \geq 0$ for any $F \in
\mathcal{F}(G)$;

(iii) {\bf $\mathbf{G}$-invariant}, if $f(Fg)=f(F)$ for any $F \in
\mathcal{F}(G)$ and $g \in G$;

(iv) {\bf sub-additive}, if $f(E \cup F) \leq f(E) + f(F)$ for any
$E, F \in \mathcal{F}(G)$;

(v) {\bf strongly sub-additive}, if $f(E \cup F) + f(E \cap F) \leq
f(E) + f(F)$ for any $E, F \in \mathcal{F}(G)$.
\\

The following limit theorem for invariant sub-additive functions on
finite subsets of amenable groups is due to Ornstein and Weiss (see
\cite{Gro, LiW, OW}). It plays a central role in the definition of
some dynamical invariants such as topological entropy and
measure-theoretic entropy.

\begin{lem} {\bf (Ornstein-Weiss)}
Let $G$ be a countable amenable group. Let $f: \mathcal{F}(G)
\rightarrow {\mathbb R}$ be a monotone non-negative $G$-invariant
sub-additive function. Then there is a real number
$\lambda=\lambda(G, f) \geqslant 0$ dependent only on $G$ and $f$
such that
$$ \lim_{n \rightarrow \infty}\frac{f(F_n)}{|F_n|}=\lambda$$
for all F$\phi$lner sequence $\{F_n\}_{n \in {\mathbb N}}$ of $G$.
\end{lem}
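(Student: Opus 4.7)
The plan is to invoke the Ornstein--Weiss quasi-tiling theorem for amenable groups and combine it with a symmetric sandwich argument forcing $\liminf$ and $\limsup$ of $f(F_n)/|F_n|$ to agree across all F$\phi$lner sequences. As a preliminary uniform bound, decomposing $F=\bigsqcup_{g\in F}\{g\}$ and combining subadditivity with $G$-invariance give $f(F)\le \sum_{g\in F} f(\{g\})=|F|\,f(\{e\})$, so $0\le f(F)/|F|\le M:=f(\{e\})$ for every $F\in\mathcal{F}(G)$. In particular both $\liminf_n f(F_n)/|F_n|$ and $\limsup_n f(F_n)/|F_n|$ are finite for any F$\phi$lner sequence.

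The key estimate is that, for any two F$\phi$lner sequences $\{F_n\}$ and $\{F'_m\}$ and any $\epsilon>0$,
$$
\limsup_{m\to\infty}\frac{f(F'_m)}{|F'_m|}\;\le\;\liminf_{n\to\infty}\frac{f(F_n)}{|F_n|}+(M+1)\epsilon.
$$
I would prove this by choosing indices $n_1<\dots<n_k$ along which (i) $f(F_{n_i})/|F_{n_i}|<\liminf_n f(F_n)/|F_n|+\epsilon$ for every $i$, and (ii) each $F_{n_{i+1}}$ is sufficiently more invariant than $F_{n_i}$ to activate the Ornstein--Weiss quasi-tiling theorem for the tile family $T_i:=F_{n_i}$. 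That theorem then produces, for all sufficiently large $m$, pairwise disjoint right translates $\{T_i c_{i,j}\}_{i,j}$ contained in $F'_m$ whose residual set has size at most $\epsilon |F'_m|$. Subadditivity of $f$, $G$-invariance (which yields $f(T_i c_{i,j})=f(T_i)$), and the uniform bound applied to the residual give
$$
f(F'_m)\;\le\;\sum_{i,j} f(T_i)+M\epsilon |F'_m|,
$$
and dividing by $|F'_m|$, using $\sum_{i,j}|T_i|\le |F'_m|$ together with $f(T_i)/|T_i|<\liminf+\epsilon$, yields the displayed bound.

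Exchanging the roles of the two F$\phi$lner sequences in this estimate and letting $\epsilon\downarrow 0$ forces $\limsup=\liminf$ both within each sequence and across sequences, so the common value $\lambda$ depends only on $G$ and $f$, and $\lambda\ge 0$ by nonnegativity. The main obstacle is the Ornstein--Weiss quasi-tiling theorem itself, whose proof is a delicate multiscale $\epsilon$-disjoint packing construction inside sufficiently invariant subsets of $G$; I would import it as a black box, citing \cite{OW, Gro, LiW}. A secondary subtlety is arranging the tiles $T_i$ to simultaneously be invariant enough for the quasi-tiling machinery and have $f$-density close to $\liminf_n f(F_n)/|F_n|$, which is handled by drawing them directly from the F$\phi$lner sequence under study.
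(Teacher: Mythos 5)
The paper does not prove this lemma at all: it is imported as a known result of Ornstein and Weiss, with pointers to \cite{OW, Gro, LiW}, so there is no in-paper argument to compare against. Your sketch is, in substance, the standard proof found in those references (for instance the appendix of Lindenstrauss--Weiss \cite{LiW} and Gromov \cite{Gro}): the uniform bound $f(F)\le |F|\,f(\{e\})$ from subadditivity plus $G$-invariance, tiles drawn from the F$\phi$lner sequence under study so as to be simultaneously highly invariant and of $f$-density within $\epsilon$ of $\liminf_n f(F_n)/|F_n|$, the Ornstein--Weiss quasi-tiling theorem as the engine, and the symmetric sandwich argument giving both existence of the limit and its independence of the F$\phi$lner sequence. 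The overall route is correct and is exactly the argument the paper implicitly relies on.

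One point needs repair. The quasi-tiling theorem does \emph{not} produce pairwise disjoint translates: it produces an $\epsilon$-disjoint family of translates covering at least a $(1-\epsilon)$-fraction of $F'_m$. Exact disjoint tiling by F$\phi$lner sets is precisely the open problem the paper flags in Section 2.1, so you cannot assume it. Consequently your step $\sum_{i,j}|T_i|\le |F'_m|$ is unjustified; $\epsilon$-disjointness only gives $\sum_{i,j}|T_i|\le |F'_m|/(1-\epsilon)$ when the translates lie in $F'_m$. This is harmless to your argument: the covering step never needed disjointness, since the paper's definition (iv) of subadditivity applies to arbitrary, possibly overlapping sets, and monotonicity handles $F'_m \subseteq \bigl(\bigcup_{i,j} T_i c_{i,j}\bigr) \cup R$ with $|R|\le \epsilon|F'_m|$; the final bound then becomes $(\liminf + \epsilon)/(1-\epsilon) + M\epsilon$ rather than $\liminf + (M+1)\epsilon$, and this still tends to the liminf as $\epsilon \downarrow 0$. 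With that correction, and the quasi-tiling theorem cited as a black box as you propose, the proof is complete.
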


\begin{rem}
{\rm (1)} If $f$ is also strongly sub-additive, then
$$\lim_{n\rightarrow \infty} \frac{f(F_n)}{|F_n|}=\inf_{F\in
F(G)}\frac{f(F)}{|F|}.$$

{\rm (2)} If $G$ admits a F$\phi$lner sequence $\{F_n\}_{n \in
{\mathbb N}}$ of tiling sets, then
$$\lim_{n \rightarrow \infty}\frac{f(F_n)}{|F_n|}=
\inf_{n \in {\mathbb N}}\frac{f(F_n)}{|F_n|},$$ and the value of the
limits is independent of the choice of such a F$\phi$lner sequence.
For details can see \cite{Wei}.
\end{rem}

\subsection{Topological pressure for sub-additive potentials} Let $(X,G)$ be a
$G$-system. Denote by ${\mathcal B}_X$ the collection of all Borel
subsets of $X$. Recall that a {\bf cover} of $X$ is a family of
Borel subsets of $X$ whose union is $X$. An {\bf open cover} is one
that consists of open sets. A {\bf partition} of $X$ is a cover of
$X$ consisting of pairwise disjoint sets. We denote the set of
finite covers, finite open covers and finite partition of $X$ by
${\mathcal C}_X$, ${\mathcal C}_X^o$ and ${\mathcal P}_X$,
respectively. Given two covers ${\mathcal U}, {\mathcal V} \in
{\mathcal C}_X$, ${\mathcal U}$ is said to be {\bf finer} than
${\mathcal V}$ (denoted by ${\mathcal U}\succeq {\mathcal V}$) if
each element of ${\mathcal U}$ is contained in some element of
${\mathcal V}$. Let ${\mathcal U} \vee {\mathcal V}=\{U \cap V : U
\in {\mathcal U}, V \in {\mathcal V}\}$. Given $F \in {\mathcal
F}(G)$ and ${\mathcal U} \in {\mathcal C}_X$, set ${\mathcal
U}_F=\bigvee_{g \in F}g^{-1}{\mathcal U}$ (letting ${\mathcal
U}_{\emptyset}=\{X\}$).

We now define the topological pressure of sub-additive potential
${\mathcal F}$ relative an open cover. For $E \in {\mathcal F}(G)$
and ${\mathcal U} \in {\mathcal C}_X^o$, we define
$$P_E(G, {\mathcal F}; {\mathcal U}):=\inf \left\{\sum_{V \in {\mathcal V}}
\sup_{x \in V} \mathrm{e}^{f_E(x)}: {\mathcal V} \in {\mathcal C}_X
\mbox{\ and } {\mathcal V} \succeq {\mathcal U}_E\right\}.$$ For
${\mathcal V} \in {\mathcal C}_X$, we let $\alpha$ be the Borel
partition generated by ${\mathcal V}$ and define
\begin{equation} \label{eq:1}
{\mathcal P}^*(\mathcal V)=\left\{\beta \in {\mathcal P}_X:
\begin{array}{l l}
& \beta \succeq {\mathcal V} \mbox{\ and each atom of } \beta
\mbox{\ is the union of }\\
& \mbox{\ some atoms of } \alpha.
\end{array}
\right\}.
\end{equation}
Note that ${\mathcal P}^*({\mathcal V})$ is a finite set. Following
the idea of Huang-Yi (see \cite[Lemma 2.1]{HYi}), we have the
following result.

\begin{lem}
For $E \in {\mathcal F}(G)$ and ${\mathcal U} \in {\mathcal C}_X$,
we have
\begin{equation} \label{eq:2}
P_E(T, {\mathcal F};{\mathcal U})=\min \left\{\sum_{B \in \beta}
\sup_{x \in B} \mathrm{e}^{f_E(x)}: \beta \in {\mathcal
P}^{*}({\mathcal U}_E)\right\}.
\end{equation}
\end{lem}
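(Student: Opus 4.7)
The plan is to prove the two inequalities between the infimum defining $P_E(G,\mathcal{F};\mathcal{U})$ and the minimum over $\mathcal{P}^{*}(\mathcal{U}_E)$. Since $\mathcal{U}_E$ is a finite open cover, the Borel partition $\alpha$ it generates has finitely many atoms, so $\mathcal{P}^{*}(\mathcal{U}_E)$ is itself finite and any infimum over it is a minimum. The easy inequality $P_E\leq\min$ is immediate: every $\beta\in\mathcal{P}^{*}(\mathcal{U}_E)$ is a finite Borel partition of $X$ with $\beta\succeq\mathcal{U}_E$, hence a legitimate competitor in the definition of $P_E$.

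For the reverse inequality, I would show that for any finite cover $\mathcal{V}=\{V_1,\ldots,V_n\}\in\mathcal{C}_X$ with $\mathcal{V}\succeq\mathcal{U}_E$, there exists $\beta\in\mathcal{P}^{*}(\mathcal{U}_E)$ satisfying
$$\sum_{B\in\beta}\sup_{x\in B}e^{f_E(x)}\ \leq\ \sum_{V\in\mathcal{V}}\sup_{x\in V}e^{f_E(x)}.$$
Set $M_i:=\sup_{x\in V_i}e^{f_E(x)}$. The idea is to assign each atom $A$ of $\alpha$ to a single index $i(A)$, chosen carefully. Let $I(A):=\{i:V_i\cap A\neq\emptyset\}$, which is non-empty since $\mathcal{V}$ covers $X$, and pick $i(A)\in I(A)$ realizing $\max_{i\in I(A)}M_i$. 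Setting $B_i:=\bigcup\{A:A\text{ is an atom of }\alpha,\ i(A)=i\}$, the collection $\beta:=\{B_i:B_i\neq\emptyset\}$ is a Borel partition of $X$ whose atoms are unions of atoms of $\alpha$.

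The verification rests on the defining property of $\alpha$: for every $U\in\mathcal{U}_E$ and every atom $A$, either $A\subseteq U$ or $A\cap U=\emptyset$. Fix $U_i\in\mathcal{U}_E$ with $V_i\subseteq U_i$; any atom $A\subseteq B_i$ meets $V_i$, hence meets $U_i$, hence lies in $U_i$, giving $B_i\subseteq U_i$, so $\beta\succeq\mathcal{U}_E$ and $\beta\in\mathcal{P}^{*}(\mathcal{U}_E)$. For the bound on the sum, given $x\in B_i$, the atom $A\ni x$ satisfies $i(A)=i$, and $x$ lies in some $V_j$, so $j\in I(A)$; the maximality of our choice yields $M_i=M_{i(A)}\geq M_j\geq e^{f_E(x)}$. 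Hence $\sup_{x\in B_i}e^{f_E(x)}\leq M_i$, and summing over $i$ finishes the estimate.

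The main obstacle is precisely this selection of $i(A)$: a naive choice of any index with $V_i\cap A\neq\emptyset$ would fail, because points of $A$ may lie outside $V_{i(A)}$ in some other $V_j$ with a larger $M_j$. The maximality of $M_{i(A)}$ over $I(A)$ is the crucial device that converts the covering property of $\mathcal{V}$ into the pointwise bound $e^{f_E(x)}\leq M_{i(A)}$ needed for the sum estimate.
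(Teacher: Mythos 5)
Your proof is correct and follows essentially the same route as the paper, which omits the argument and simply cites Lemma~2.1 of Huang--Yi \cite{HYi}: the easy inequality via $\mathcal{P}^{*}(\mathcal{U}_E)\subseteq\{\mathcal{V}\in\mathcal{C}_X:\mathcal{V}\succeq\mathcal{U}_E\}$, and for the reverse, the same construction assigning each atom $A$ of the partition generated by $\mathcal{U}_E$ to an index $i(A)$ maximizing $\sup_{x\in V_i}\mathrm{e}^{f_E(x)}$ over the $V_i$ meeting $A$. Your identification of the maximal choice of $i(A)$ as the crux (since $A\subseteq\bigcup_{i\in I(A)}V_i$ gives the pointwise bound, while any $i\in I(A)$ suffices for $B_i\subseteq U_i$) is exactly the device in the cited proof.
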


\begin{lem}
The following hold:

{\rm (1)}\ $K=\sup \left\{\frac{|f_E(x)|}{|E|}: x \in X, E\in
F(G)\right\}<\infty$;

{\rm (2)}\ Set $\mathcal{G}=\{f_E(x)+ C|E|: E\in F(G), f_E\in
\mathcal {F}\}$, then ${\mathcal {G}}$ is a monotone non-negative
sub-additive function. If $\mathcal {F}$ is strongly sub-additive,
then $\mathcal {G}$ is also strongly sub-additive.
\end{lem}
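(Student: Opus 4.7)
Both parts of the lemma reduce to carefully combining (C1)--(C3) with the observation that, by (C2), every singleton potential satisfies $f_{\{g\}}(x) = f_{\{e\}g}(x) = f_{\{e\}}(gx)$, so that $M := \sup_{x \in X}|f_{\{e\}}(x)|$ controls all of the $f_{\{g\}}$ uniformly; here $M < \infty$ by continuity of $f_{\{e\}}$ on the compact space $X$.

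For part (1), an upper bound on $f_E(x)$ comes from iterating (C1) along the disjoint partition $E = \bigcup_{g \in E}\{g\}$, giving $f_E(x) \leq \sum_{g \in E} f_{\{g\}}(x) \leq M|E|$. The matching lower bound comes from iterating (C3): pick any $g_0 \in E$ and grow $E$ out of $\{g_0\}$ by adjoining the remaining elements one at a time, each step costing at most $C$, so that $f_E(x) \geq f_{\{g_0\}}(x) - C(|E|-1) \geq -(M+C)|E|$. Combining the two bounds yields $K \leq M + C < \infty$.

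For part (2), write $\mathcal{G}_E(x) := f_E(x) + C|E|$ and verify each property pointwise in $x$. Monotonicity is a restatement of (C3): for $E \subseteq F$, the same peeling argument yields $f_F(x) - f_E(x) \geq -C(|F|-|E|)$, hence $\mathcal{G}_F(x) \geq \mathcal{G}_E(x)$. For sub-additivity I would decompose $E \cup F = E \sqcup (F \setminus E)$ and apply (C1) to get $f_{E \cup F}(x) \leq f_E(x) + f_{F \setminus E}(x)$, then use the monotonicity just established in the form $f_{F \setminus E}(x) \leq f_F(x) + C|E \cap F|$ to obtain $f_{E \cup F}(x) \leq f_E(x) + f_F(x) + C|E \cap F|$. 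Since $|E \cup F| + |E \cap F| = |E| + |F|$, this rearranges directly to $\mathcal{G}_{E \cup F}(x) \leq \mathcal{G}_E(x) + \mathcal{G}_F(x)$. The strongly sub-additive case, under the extra hypothesis on $\mathcal{F}$, is then immediate: adding $C(|E \cup F| + |E \cap F|) = C(|E|+|F|)$ to both sides of $f_{E\cup F} + f_{E\cap F} \leq f_E + f_F$ gives the analogous inequality for $\mathcal{G}$.

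Non-negativity of $\mathcal{G}$ is the one subtle point, since part (1) yields only $f_E(x) \geq -K|E|$ and $C$ as defined in (C3) need not exceed $K$. I would handle this by silently enlarging $C$ to $\max(C,K)$ in the definition of $\mathcal{G}$; enlarging $C$ only strengthens (C3), so all the preceding arguments go through verbatim. The main genuine obstacle is the sub-additivity step for overlapping $E, F$: it must be reduced to the disjoint case of (C1) via the peeling trick $E \cup F = E \sqcup (F \setminus E)$, and that reduction in turn leans on the monotonicity of $\mathcal{G}$ proved moments before; the rest of the lemma is routine bookkeeping with the axioms (C1)--(C3).
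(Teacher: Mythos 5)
Your part (1) and the monotonicity and sub-additivity computations in part (2) are correct, and they are exactly the routine bookkeeping the paper has in mind (its entire proof is the remark that the lemma ``easily follows from conditions (C1), (C2) and (C3)''). The one place you deviate from a faithful proof is the one place where there is a genuine problem: non-negativity. You assert that $\mathcal{G}$ may fail to be non-negative with the constant $C$ of (C3) and propose to ``silently enlarge'' $C$ to $\max(C,K)$. That proves a different statement from the one asserted: the lemma, and its subsequent use (where $\log P_{F_n}(G,\mathcal{G};\mathcal{U})$ is corrected by subtracting exactly this $C$ to define the pressure), concern $\mathcal{G}_E=f_E+C|E|$ with $C$ as defined in (C3). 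Moreover the enlargement is unnecessary, because non-negativity with the original $C$ is automatic; this is the small idea you missed. For any $x\in X$, $g\in G$ and any $E\in\mathcal{F}(G)$ with $g\notin E$, condition (C1) gives $f_{E\cup\{g\}}(x)\leq f_E(x)+f_{\{g\}}(x)$, i.e.\ $f_E(x)-f_{E\cup\{g\}}(x)\geq -f_{\{g\}}(x)$; taking the supremum in (C3) yields $C\geq -f_{\{g\}}(x)$ for all $g\in G$ and $x\in X$, that is, $f_{\{g\}}(x)\geq -C$. Feeding this into your own peeling bound $f_E(x)\geq f_{\{g_0\}}(x)-C(|E|-1)$ gives $f_E(x)\geq -C|E|$, hence $\mathcal{G}_E(x)\geq 0$ pointwise, with no change to $C$. (This also streamlines part (1): together with $f_E(x)\leq M|E|$ one gets $K\leq\max(C,M)<\infty$, and it shows your worry that ``$C$ need not exceed $K$'' is a red herring --- non-negativity needs $f_E\geq -C|E|$, not $C\geq K$.)

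One further small point: your reduction of sub-additivity to the disjoint case via $E\cup F=E\sqcup(F\setminus E)$ tacitly assumes $F\setminus E\neq\emptyset$, since the potential is indexed by non-empty finite subsets. When $F\subseteq E$ the desired inequality $\mathcal{G}_{E\cup F}\leq \mathcal{G}_E+\mathcal{G}_F$ is precisely the statement $\mathcal{G}_F\geq 0$, so the logically correct order is: non-negativity first (as above), then monotonicity, then sub-additivity. With these two repairs, all of your computations stand and the lemma is proved as stated.
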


\begin{proof}
It easily follows from conditions (C1), (C2) and (C3).
\end{proof}

It is not hard to see that $E \in {\mathcal F}(G) \mapsto \log
P_E(G, {\mathcal G}, {\mathcal U})$ is a monotone non-negative
$G$-invariant sub-additive function. By Lemma 2.1,
$$\lim_{n \rightarrow \infty}
\frac{1}{|F_n|}\log P_{F_n}(G, {\mathcal F}; {\mathcal U})=\lim_{n
\rightarrow \infty} \frac{1}{|F_n|}\log P_{F_n}(G, {\mathcal G};
{\mathcal U})-C$$ is independence of the choice of the F$\phi$lner
sequence $\{F_n\}_{n \in \mathbb{N}}$. Define the {\bf topological
pressure of ${\mathcal F}$ relative to ${\mathcal U}$} as
\begin{equation} \label{eq:pressure}
P(G, {\mathcal F}; {\mathcal U}):=\lim_{n \rightarrow \infty}
\frac{1}{|F_n|}\log P_{F_n}(G, {\mathcal F}; {\mathcal U}),
 \end{equation}
where $\{F_n\}_{n \in {\mathbb N}}$ is a F$\phi$lner sequence of
$G$. The {\bf topological pressure of ${\mathcal F}$} is defined by
\begin{equation} \label{eq:pressure2}
P(G, {\mathcal F}):=\sup_{\mathcal{U} \in \mathcal{C}_X^o} P(G,
{\mathcal F}; {\mathcal U})
 \end{equation}

For a $G$-invariant Borel probability measure $\mu$, denote
$${\mathcal F}_*(\mu):=\lim_{n \rightarrow \infty}\frac{1}{|F_n|}
\int \, f_{F_n} \, \mathrm{d} \mu,$$ where $\{F_n\}_{n \in {\mathbb
N}}$ is a F$\phi$lner sequence. The existence of the above limit
follows from conditions (C1) and (C2). We call ${\mathcal F}_*(\mu)$
the {\bf Lyapunov exponent of ${\mathcal F}$ with respect to $\mu$}.

\subsection{Measure-theoretic entropy} Recall the basic definitions
(see \cite{HYZ} for details). Let ${\mathcal M}(X)$, ${\mathcal
M}(X, G)$ and ${\mathcal M}^e(X, G)$ be the sets of all Borel
probability measures, $G$-invariant Borel probability measures on
$X$ and $G$-invariant ergodic measures, on $X$, respectively. Note
that amenability of $G$ ensures that ${\mathcal M}(X, G) \neq
\emptyset$ and both ${\mathcal M}(X)$ and ${\mathcal M}(X, G)$ are
convex compact metric spaces when endowed with the
weak$^*$-topology; ${\mathcal M}^e(X, G)$ is a $G_{\delta}$ subset
of $\mathcal{M}(X, G)$.

Given $\alpha, \beta \in {\mathcal P}_X$ and $\mu \in {\mathcal
M}(X)$, define
$$H_{\mu}(\alpha)=\sum_{A \in \alpha}-\mu(A) \log \mu(A) \mbox{\ \ \ and \ \ }
H_{\mu}(\alpha|\beta)=H_{\mu}(\alpha \vee \beta)-H_{\mu}(\beta).$$
One standard fact is that $H_{\mu}(\alpha|\beta)$ increases with
respect to $\alpha$ and decreases with respect to $\beta$. When $\mu
\in {\mathcal M}(X, G)$, it is not hard to see that $F \in {\mathcal
F}(G) \mapsto H_{\mu}(\alpha_F)$ is a monotone non-negative
$G$-invariant sub-additive function for a given $\alpha \in
{\mathcal P}_X$. The {\bf measure-theoretic entropy of $\mu$
relative to $\alpha$} is defined by
\begin{equation} \label{eq:entropy1}
h_{\mu}(G, \alpha)=\lim_{n \rightarrow \infty}
\frac{1}{|F_n|}H_{\mu}(\alpha_{F_n})=\inf_{F \in
F(G)}\frac{1}{|F|}H_{\mu}(\alpha_F),
 \end{equation}
where $\{F_n\}_{n \in {\mathbb N}}$ is a F$\phi$lner sequence of
$G$. The last identity follows from the fact that
$H_{\mu}(\alpha_F)$ is strongly sub-additive (see \cite[Lemma
3.1]{HYZ}). The {\bf measure-theoretic entropy of $\mu$} is defined
by
\begin{equation} \label{eq:entropy2}
h_{\mu}(G, X)=\sup_{\alpha \in {\mathcal P}_X} h_{\mu}(G, \alpha).
 \end{equation}

For a given ${\mathcal U} \in {\mathcal C}_X$, W. Huang, X. Ye and
G. Zhang (see \cite{HYZ}) introduced the following two types of {\bf
measure-theoretic entropies relative to ${\mathcal U}$} as
$$h_{\mu}^-(G, \mathcal{U}):=\lim_{n \rightarrow \infty}
\frac{1}{|F_n|}H_{\mu}(\mathcal{U}_{F_n}) \mbox{\ \ \ \ and \ \ \
}h_{\mu}^+(G, {\mathcal U}):=\inf_{\alpha \succeq {\mathcal U},
\alpha \in {\mathcal P}_X}h_{\mu}(G, \alpha),$$ where
$$H_{\mu}(\mathcal{U}):=\inf_{\alpha \succeq {\mathcal U}, \alpha
\in {\mathcal P}_X} H_{\mu}(\alpha).$$

\begin{rem}
(1) It is not hard to see that $h_{\mu}^-(G, {\mathcal U}) \leq
h_{\mu}^+(G, {\mathcal U})$. Moreover, Huang-Ye-Zhang (see
\cite[Theorem 4.14]{HYZ}) proved those two kinds of
measure-theoretic entropy are equivalent, thus, we denote by
$$h_{\mu}(G, \mathcal{U})=h_{\mu}^{\pm}(G, \mathcal{U}).$$

(2) For $\mu \in \mathcal{M}(X, G)$, the following holds (see
\cite{HYZ}): $$h_{\mu}(G, X)=\sup_{\mathcal{U} \in \mathcal{C}_X^o}
h_{\mu}(G, \mathcal{U}).$$
\end{rem}

\begin{lem}
{\bf (Ergodic decomposition of local entropy, \cite{HYZ})} Let
$\mathcal{U} \in \mathcal{C}_X^o$ and $\mu \in \mathcal{M}(X, G)$.
The local entropy function $h_{\cdot}(G, \mathcal{U})$ is upper
semi-continuous and affine on $\mathcal{M}(X, G)$, and
$$h_{\mu}(G, {\mathcal U})=\int_{\mathcal{M}^e(X, G)} \,
h_{\theta}(G, {\mathcal U}) \, \mathrm{d}m(\theta),$$ where
$\mu=\displaystyle{\int_{\mathcal{M}^e(X, G)} \, \theta \,
\mathrm{d}m(\theta)}$ is the ergodic decomposition of $\mu$.
\end{lem}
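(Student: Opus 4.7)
The plan is to prove the three assertions in sequence, reducing each to a property of finite partitions where the standard amenable-group entropy theory applies, and then transferring back to open covers via the characterization $h_\mu^+(G,\mathcal{U}) = \inf_{\alpha \succeq \mathcal{U},\, \alpha \in \mathcal{P}_X} h_\mu(G,\alpha)$ from Remark 2.4. The main device throughout is approximation by partitions refining $\mathcal{U}$ whose atom boundaries are null with respect to the relevant invariant measure.

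For upper semi-continuity at a fixed $\mu_0 \in \mathcal{M}(X,G)$: given $\epsilon > 0$, I would use the $h^+$-definition to pick a Borel partition $\alpha_0 \succeq \mathcal{U}$ with $h_{\mu_0}(G,\alpha_0) < h_{\mu_0}(G,\mathcal{U}) + \epsilon/2$, then shrink each atom slightly using inner regularity of $\mu_0$ so that $\mu_0(\partial \alpha_0)=0$ while $\alpha_0$ still refines $\mathcal{U}$ and the entropy increases by at most $\epsilon/2$. For every $F \in \mathcal{F}(G)$ the boundary of $\alpha_{0,F}$ sits inside $\bigcup_{g \in F} g^{-1}\partial\alpha_0$, which by $G$-invariance is $\mu_0$-null; hence the Portmanteau theorem makes $\nu \mapsto H_\nu(\alpha_{0,F})$ continuous at $\mu_0$. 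By strong sub-additivity of $F \mapsto H_\nu(\alpha_{0,F})$, $h_\nu(G,\alpha_0) = \inf_F |F|^{-1} H_\nu(\alpha_{0,F})$ is an infimum of continuous functions, hence upper semi-continuous at $\mu_0$. Combined with $h_\nu(G,\mathcal{U}) \leq h_\nu(G,\alpha_0)$, this gives $\limsup_{\nu \to \mu_0} h_\nu(G,\mathcal{U}) \leq h_{\mu_0}(G,\mathcal{U}) + \epsilon$.

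For the integral formula, the easy direction uses the classical ergodic decomposition of partition entropy for amenable actions, $h_\mu(G,\alpha)=\int h_\theta(G,\alpha)\,dm(\theta)$: applied to every $\alpha \succeq \mathcal{U}$ and combined with $h_\theta(G,\alpha) \geq h_\theta(G,\mathcal{U})$ pointwise, taking the infimum yields $h_\mu(G,\mathcal{U}) \geq \int h_\theta(G,\mathcal{U})\,dm(\theta)$. The reverse inequality is the main obstacle and is handled by exploiting upper semi-continuity: $\theta \mapsto h_\theta(G,\mathcal{U})$ is Borel on $\mathcal{M}^e(X,G)$, so for each $\epsilon>0$ one selects, using measurable selection applied to the compact-valued map $\theta \mapsto \{\alpha \succeq \mathcal{U} : h_\theta(G,\alpha) < h_\theta(G,\mathcal{U}) + \epsilon\}$, a countable Borel partition $\{A_k\}$ of $\mathcal{M}^e(X,G)$ and partitions $\alpha_k \succeq \mathcal{U}$ with $h_\theta(G,\alpha_k) < h_\theta(G,\mathcal{U}) + \epsilon$ for $\theta \in A_k$. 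These are then spliced into a single partition $\alpha \succeq \mathcal{U}$ via the ergodic decomposition of $\mu$, and the affineness of classical entropy on convex combinations supported on disjoint ergodic regions yields $h_\mu(G,\alpha) \leq \int h_\theta(G,\mathcal{U})\,dm(\theta) + \epsilon$. Affineness of $h_\cdot(G,\mathcal{U})$ is then immediate: if $\mu = p\mu_1 + (1-p)\mu_2$ with ergodic decompositions $m_1,m_2$, then $m = pm_1+(1-p)m_2$ decomposes $\mu$, and the integral formula gives $h_\mu(G,\mathcal{U}) = p\,h_{\mu_1}(G,\mathcal{U}) + (1-p)\,h_{\mu_2}(G,\mathcal{U})$.

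The hardest step is the splicing argument producing a single partition whose entropy under $m$-a.e.\ ergodic $\theta$ is close to $h_\theta(G,\mathcal{U})$; the upper semi-continuity from the first part is what makes the constructed neighborhoods $\{\nu : h_\nu(G,\alpha_k) < h_\theta(G,\mathcal{U})+2\epsilon\}$ open and hence usable for a measurable selection. This is also the point where the argument for a general amenable group $G$ diverges from the classical $\mathbb{Z}$-action proof, since one must work with F$\phi$lner averages rather than iterates and verify that the strong sub-additivity from Remark 2.2(1) survives the splicing.
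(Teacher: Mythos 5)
The paper itself offers no proof of this lemma; it is quoted verbatim from \cite{HYZ}, so your reconstruction has to stand on its own. Its overall architecture is the standard one (and essentially the route of the cited source and of Huang--Maass--Romagnoli--Ye in the $\Z$-case): for upper semi-continuity, a partition in $\mathcal{U}^*$ with $\mu_0$-null atom boundaries, Portmanteau, and the infimum formula $h_\nu(G,\alpha)=\inf_{F\in\mathcal{F}(G)}|F|^{-1}H_\nu(\alpha_F)$ coming from strong sub-additivity; for the integral formula, the fixed-partition ergodic decomposition $h_\mu(G,\alpha)=\int h_\theta(G,\alpha)\,\mathrm{d}m(\theta)$, an infimum exchange for the inequality $\geq$, and a splicing of near-optimal partitions over a countable Borel partition of $\mathcal{M}^e(X,G)$ for $\leq$; affineness then follows from uniqueness of the barycentric representation. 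The first part is correct modulo standard details (you need $|h_{\mu_0}(G,\alpha)-h_{\mu_0}(G,\beta)|\leq H_{\mu_0}(\alpha|\beta)+H_{\mu_0}(\beta|\alpha)$, valid for amenable actions, to control the entropy cost of passing to a null-boundary partition), and the affineness deduction is fine.

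Two steps in your hard direction, however, fail as stated. First, the openness claim is false: for a \emph{fixed} Borel partition $\alpha_k$, the map $\nu\mapsto h_\nu(G,\alpha_k)$ is not upper semi-continuous on $\mathcal{M}(X,G)$ --- the u.s.c.\ you established concerns $h_\cdot(G,\mathcal{U})$ at a single $\mu_0$ and rests on a partition chosen with $\mu_0$-null boundaries; it gives nothing for an arbitrary $\alpha_k$ at an arbitrary $\theta$, so the sets $\{\nu: h_\nu(G,\alpha_k)<h_\theta(G,\mathcal{U})+2\epsilon\}$ need not be open. Second, the compact-valued measurable selection over the family of all Borel partitions refining $\mathcal{U}$ is unfounded as stated, since that family carries no standard Borel structure in your setup. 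Both defects are repaired by a countable reduction, which is the actual engine of the splicing: fix a countable algebra $\mathcal{A}$ generated by a countable base of open sets and verify, via the subordination trick inside $\mathcal{U}^*$ and Rokhlin-metric continuity of $h_\theta(G,\cdot)$, that $h_\theta(G,\mathcal{U})=\inf\{h_\theta(G,\beta):\beta\succeq\mathcal{U},\ \text{atoms of }\beta\text{ in }\mathcal{A}\}$ for every invariant $\theta$. Since $h_\theta(G,\beta)=\inf_{F\in\mathcal{F}(G)}|F|^{-1}H_\theta(\beta_F)$ is a countable infimum of Borel functions of $\theta$ ($\mathcal{F}(G)$ is countable), the function $\theta\mapsto h_\theta(G,\mathcal{U})$ is Borel and the sets $\{\theta: h_\theta(G,\beta_k)<h_\theta(G,\mathcal{U})+\epsilon\}$, for an enumeration $\{\beta_k\}$, can be disjointified into your $\{A_k\}$ with no selection theorem at all. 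Finally, make the splicing explicit in indexed form: writing $\beta_k=\{B_{k,1},\dots,B_{k,d}\}\in\mathcal{U}^*$ with $B_{k,j}\subseteq U_j$ and letting $X_k$ be the invariant set of points whose ergodic component lies in $A_k$, set $C_j=\bigcup_k(B_{k,j}\cap X_k)$; only this atom-by-atom gluing returns a \emph{finite} partition refining $\mathcal{U}$ --- a naive union of the $\beta_k$ over the $X_k$ is a countable partition and leaves the class over which $h^+_\mu(G,\mathcal{U})$ is an infimum. With these repairs your argument closes and agrees in substance with the proof in \cite{HYZ}.
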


\section{A local variational principle of topological pressure}

In this section, we mainly prove a local variational principle of
topological pressure for sub-addition potentials.

\subsection{Some Lemmas} Now we give some lemmas which are needed in
our proof of Theorem 1.1. The first lemma is an obvious fact and we
omit the detailed proof.

\begin{lem}
Let $T \in  {\mathcal F}(G)$ be a tile of $G$ and $\{F_n\}_{n\in
\mathbb{N}}$ a F$\o$lner sequence. For each $n \in {\mathbb N}$, let
$C_n$ be the tiling center of $F_n$ relative to $T$, i.e., $F_n
\subseteq \bigsqcup_{c\in C_n}Tc$ and $Tc\cap F_n\neq \emptyset$ for
all $c\in C_n$, then
$$\lim_{n\rightarrow\infty}\frac{|TC_n|}{|F_n|}=1.$$
\end{lem}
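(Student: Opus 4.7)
The plan is to show that the set $TC_n$ differs from $F_n$ only by a layer of ``boundary tiles'' whose total mass is controlled by the F\o lner defect of $F_n$ with respect to $K=TT^{-1}$.

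First, since $F_n\subseteq\bigsqcup_{c\in C_n}Tc = TC_n$ by hypothesis, we have $|TC_n|=|F_n|+|TC_n\setminus F_n|$, so it suffices to prove $|TC_n\setminus F_n|/|F_n|\to 0$. Partition the centers as $C_n = C_n^{\text{int}}\sqcup C_n^{\text{bd}}$ where $C_n^{\text{int}}=\{c\in C_n:Tc\subseteq F_n\}$ and $C_n^{\text{bd}}=\{c\in C_n: Tc\not\subseteq F_n\}$. Only boundary tiles contribute to $TC_n\setminus F_n$, so
$$TC_n\setminus F_n \;=\; \bigsqcup_{c\in C_n^{\text{bd}}}(Tc\setminus F_n).$$

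Next I would bound each $Tc\setminus F_n$ by inclusion inside a fixed thickening of $F_n$. Fix $c\in C_n^{\text{bd}}$. By hypothesis $Tc\cap F_n\neq\emptyset$, so pick $t_0\in T$ with $t_0c\in F_n$. Then for every $t\in T$, $tc=(tt_0^{-1})(t_0c)\in TT^{-1}F_n=KF_n$, whence $Tc\subseteq KF_n$. Combined with $Tc\not\subseteq F_n$ this gives
$$\emptyset\;\neq\; Tc\setminus F_n\;\subseteq\;KF_n\setminus F_n.$$
Because the translates $\{Tc\}_{c\in C_n}$ are pairwise disjoint (as $T$ is a tile), so are the sets $Tc\setminus F_n$, and therefore
$$|TC_n\setminus F_n|\;=\;\sum_{c\in C_n^{\text{bd}}}|Tc\setminus F_n|\;\leq\;|KF_n\setminus F_n|\;\leq\;|KF_n\triangle F_n|.$$

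Finally, applying the F\o lner property to the finite set $K=TT^{-1}$ gives $|KF_n\triangle F_n|/|F_n|\to 0$, hence
$$\frac{|TC_n|}{|F_n|}\;=\;1+\frac{|TC_n\setminus F_n|}{|F_n|}\;\longrightarrow\;1.$$

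There is no real obstacle here; the only subtle point is to choose the right thickening. A naive attempt using $K=T$ alone does not work because the tile centers $c$ need not lie in $F_n$, so one cannot conclude $Tc\subseteq TF_n$. The correct choice $K=TT^{-1}$ comes precisely from pivoting through a point $t_0c\in Tc\cap F_n$, which is exactly the information provided by the hypothesis $Tc\cap F_n\neq\emptyset$.
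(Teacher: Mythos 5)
Your proof is correct, and since the paper dismisses this lemma as ``an obvious fact'' with no proof given, your argument simply supplies the standard details the authors omitted: the pivot through a point $t_0c\in Tc\cap F_n$ to get $Tc\subseteq TT^{-1}F_n$, disjointness of the translates $Tc$, and the F\o lner property applied to $K=TT^{-1}$. Every step checks out, including your closing remark that the naive thickening $TF_n$ fails because the centers $c$ need not lie in $F_n$ --- that is indeed the one subtlety of the lemma.
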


\begin{lem}
Let $f: {\mathcal F}(G)\rightarrow \mathbb{R}$ be a non-negative
monotone strongly sub-additive function, $m, k \in \mathbb{N}$, $E,
F, B, E_1, \cdots, E_k \in {\mathcal F}(G)$. Then

\vspace{2mm}

{\rm (1)}  If $1_E(g)=\frac{1}{m}\sum_{i=1}^k 1_{E_i}(g)$ holds for
each $g\in G$, then $f(E)\leq \frac{1}{m}\sum_{i=1}^k f(E_i)$;

\vspace{2mm}

{\rm (2)}  If $K=\sup\{\frac{f(E)}{|E|}: E \in {\mathcal
F}(G)\}<\infty$, then
$$f(F)\leq \sum_{g\in F}\frac{1}{|B|}f(Bg)+ K \cdot |F\setminus A_{F,B}|\text{,}$$
where, $A_{F,B}=\{g\in G: B^{-1}g\subseteq F\}$.
\end{lem}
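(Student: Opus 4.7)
The plan is to prove part (1) by the classical ``uncrossing'' procedure for strongly sub-additive set functions, and then to deduce (2) by applying (1) to a carefully chosen subfamily. For (1), I first extend $f$ to the empty set by $f(\emptyset):=0$; this preserves non-negativity, monotonicity and strong sub-additivity. Next I repeatedly uncross: while two sets $E_i, E_j$ in the family are incomparable, I replace the pair $(E_i, E_j)$ by $(E_i\cup E_j, E_i\cap E_j)$. This operation preserves the pointwise sum $\sum_i 1_{E_i}$ (since $1_{A\cup B}+1_{A\cap B}=1_A+1_B$) and, by strong sub-additivity, does not increase $\sum_i f(E_i)$. A short calculation shows $\sum_i |E_i|^2$ strictly increases at each such step (the increment is $2(|E_i|-|E_i\cap E_j|)(|E_j|-|E_i\cap E_j|)>0$); since it is uniformly bounded, the process terminates and the final family is a chain. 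The constraint $\sum_i 1_{E_i}=m\cdot 1_E$ on a chain forces exactly $m$ copies of $E$ and $k-m$ copies of $\emptyset$, so the chain satisfies $\sum_i f(E_{(i)})=m f(E)$; combined with the non-increase under uncrossing this gives $f(E)\leq \tfrac{1}{m}\sum_i f(E_i)$.

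For (2), set $A:=A_{F,B}$ and decompose $F=(F\cap A)\sqcup(F\setminus A)$. Strong sub-additivity on these disjoint pieces (again using $f(\emptyset)=0$) and the definition of $K$ yield
$$f(F)\leq f(F\cap A)+f(F\setminus A)\leq f(F\cap A)+K\cdot|F\setminus A|,$$
so it suffices to bound $f(F\cap A)\leq \tfrac{1}{|B|}\sum_{g\in F}f(Bg)$. I apply part (1) with $E:=F\cap A$, $m:=|B|$, and the family $\{E_g:=Bg\cap (F\cap A):g\in F\}$. The multiplicity hypothesis reduces to
$$\sum_{g\in F}1_{E_g}(h)=1_{F\cap A}(h)\cdot |B^{-1}h\cap F|,$$
and for $h\in F\cap A$ the defining condition $B^{-1}h\subseteq F$ of $A_{F,B}$ gives $|B^{-1}h\cap F|=|B|$, so the sum equals $|B|\cdot 1_{F\cap A}(h)$, while it vanishes outside $F\cap A$. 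Part (1) then delivers $f(F\cap A)\leq \tfrac{1}{|B|}\sum_{g\in F}f(E_g)$, and monotonicity replaces each $f(E_g)$ by $f(Bg)$, completing the bound.

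The principal obstacle lies in the uncrossing step of (1): one must identify a strictly increasing monovariant to guarantee termination, and then argue that the integer-valued multiplicity constraint forces the resulting chain into the specific configuration of $m$ copies of $E$ and $k-m$ copies of $\emptyset$. Part (2) is comparatively routine once (1) is available, subject only to the mild book-keeping of the $f(\emptyset)=0$ convention whenever $F\cap A$ or $F\setminus A$ happens to be empty.
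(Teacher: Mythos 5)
Your proof is correct, and on part (1) it takes a genuinely different route from the paper's. The paper proves (1) by a one-pass telescoping argument: it enumerates the atoms $A_1,\dots,A_n$ of $\bigvee_{i=1}^k\{E_i,E\setminus E_i\}$, builds the increasing chain $K_i=\bigcup_{j\leq i}A_j$ from $\emptyset$ up to $E$, and applies strong sub-additivity once for each pair $(i,j)$ with $A_i\subseteq E_j$ to get $f(K_i)-f(K_{i-1})\leq f(K_i\cap E_j)-f(K_{i-1}\cap E_j)$, after which the weighted sum telescopes directly to $\frac{1}{m}\sum_j f(E_j)$ with no iteration needed. You instead run the classical uncrossing scheme; your termination monovariant is right (the increment of $\sum_i|E_i|^2$ is indeed $2ab$ with $a=|E_i\setminus E_j|$, $b=|E_j\setminus E_i|$, and since every set stays inside $E$ the monovariant is bounded by $k|E|^2$), and your classification of the terminal chain as $m$ copies of $E$ plus $k-m$ copies of $\emptyset$ follows correctly from the preserved multiplicity identity. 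The trade-off: the paper's argument is a single computation requiring no termination or chain analysis, while yours is longer to formalize but makes each application of strong sub-additivity a single transparent exchange step. Your part (2) is essentially the paper's proof: the paper applies (1) to the set $\{h\in BF: B^{-1}h\subseteq F\}$ (which in fact equals $A_{F,B}$) via the same multiplicity identity $|B^{-1}h\cap F|=|B|$, then uses monotonicity $E_g\subseteq Bg$ and the $K$-bound on $F\setminus A_{F,B}$ exactly as you do; intersecting everything with $F$ changes nothing. One point you should state as a convention rather than a consequence: your claim that setting $f(\emptyset):=0$ \emph{preserves} strong sub-additivity silently includes the inequality $f(E\cup F)\leq f(E)+f(F)$ for disjoint $E,F$, which does \emph{not} follow from strong sub-additivity restricted to intersecting pairs (take $f\equiv 1$ on singletons and $f\equiv 10$ on all larger sets: monotone, non-negative, strongly sub-additive on every intersecting pair, yet not sub-additive on disjoint singletons). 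This is, however, exactly the reading the paper itself adopts --- its definition (v) already invokes $f(E\cap F)$ for possibly disjoint $E,F$, and its own telescoping starts from $K_0=\emptyset$ --- and in every application in the paper the disjoint-pair inequality is supplied by condition (C1) or by the entropy function, so your convention is consistent with the paper's usage and your proof stands.
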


\begin{proof}
(1) Clearly, $\bigcup_{i=1}^k E_i=E$. Set
$\{A_1,\cdots,A_n\}=\bigvee_{i=1}^k \{E_i, E \setminus E_i\}$
(neglecting all empty elements). Set $K_0=\emptyset$,
$K_i=\bigcup_{j=1}^i A_j$,\ $i=1,\cdots,n$. Then
$\emptyset=K_0\subsetneq K_1\cdots \subsetneq K_n=E$. Note that if
for some $i=1,\cdots,n$ and  $j=1,\cdots,k$ with
$E_j\cap(K_i\setminus K_{i-1})\neq \emptyset$, then $K_i\setminus
K_{i-1}\subseteq E_j$, and so $K_i=K_{i-1}\cup (K_i\cap E_j)$. By
strongly sub-additive of $f$, we have $f(K_i)+f(K_{i-1}\cap E_j)\leq
f(K_{i-1})+f(K_i\cap E_j)$, i.e.,
$$f(K_i)-f(K_{i-1})\leq f(K_i\cap E_j)-f(K_{i-1}\cap E_j).$$
Now for each $i=1,\cdots,n$, we pick $k_i\in K_i\setminus K_{i-1}$,
one has
\begin{eqnarray*}
f(E)&=&\sum_{i=1}^n\left (\frac{1}{m}\sum_{i=1}^k
1_{E_i}(k_i)\right )(f(K_i)-f(K_{i-1}))\\
&=&\frac{1}{m}\sum_{j=1}^k\sum_{1\leq i\leq n \atop k_i\in
E_j}(f(K_i)-f(K_{i-1}))\\
&\leq&\frac{1}{m}\sum_{j=1}^k\sum_{1\leq i\leq n \atop k_i\in
E_j}(f(K_i\cap E_j)-f(K_{i-1}\cap E_j))\\
&\leq&\frac{1}{m}\sum_{j=1}^k\sum_{i=1}^n(f(K_i\cap
E_j)-f(K_{i-1}\cap E_j))=\frac{1}{m}\sum_{j=1}^kf(E_j),
\end{eqnarray*}

(2) Note that for each $l\in G$, we have $1_{\{h\in BF:
B^{-1}h\subseteq F\}}(l)=\frac{1}{|B|}\sum_{g\in F} 1_{\{h\in Bg:
B^{-1}h\subseteq F\}}(l)$. Using (1), we can get
$$f\left(\left\{h\in BF: B^{-1}h\subseteq
F \right\}\right)\leq \frac{1}{|B|}\sum_{g\in F} f\left(\left\{h\in
Bg: B^{-1}h\subseteq F \right\}\right)\leq \frac{1}{|B|}\sum_{g\in
F} f(Bg),$$ which implies
\begin{eqnarray*}
f(F)&\leq& f\left(\left\{h\in BF: B^{-1}h\subseteq F\right\}\right)+
f(F\setminus \{h\in BF:
B^{-1}h\subseteq F\})\\
&\leq&\frac{1}{|B|}\sum_{g\in F} f(Bg)+|F\setminus \{h\in BF:
B^{-1}h\subseteq F\}|\cdot K\\
&=&\frac{1}{|B|}\sum_{g\in F} f(Bg)+ K \cdot |F\setminus A_{F,B}|.
\end{eqnarray*}
The lemma is proved.
\end{proof}

\begin{lem}
Let $(X, G)$ be a zero-dimensional $G$-system, $\mu \in
\mathcal{M}(X, G)$, $\mathcal{F}$ is a sub-additive potential and
$\mathcal{U} \in \mathcal{C}_X^o$. Assume that for some $K \in
{\mathbb N}$, $\{\alpha_l\}_{l=1}^K$ is a sequence of finite clopen
(close and open) partitions of $X$ which are finer that ${\mathcal
U}$. Then for each $E \in {\mathcal F}(G)$, there is a finite subset
$B_E$ of $X$ such that each atom of $(\alpha_l)_E$, $l=1, \cdots,
K$, contains at most one point of $B_E$, and $\sum_{x \in B_E}
\mathrm{e}^{f_E(x)} \geq \frac{P_E(G, {\mathcal F},
\mathcal{U})}{K}$.
\end{lem}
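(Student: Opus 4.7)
The plan is to exploit that each $(\alpha_l)_E$ is itself a clopen cover refining $\mathcal{U}_E$, combined with a pigeonhole over $l=1,\dots,K$. Because $\alpha_l\succeq\mathcal{U}$ and $\alpha_l$ is clopen, $(\alpha_l)_E=\bigvee_{g\in E}g^{-1}\alpha_l$ is a clopen partition refining $\mathcal{U}_E$, and each of its atoms is clopen, hence compact. Consequently the continuous function $\mathrm{e}^{f_E}$ attains its supremum on every atom. Since $(\alpha_l)_E$ is a cover of $X$ refining $\mathcal{U}_E$, the infimum definition of the topological pressure gives
\[
s_l:=\sum_{A\in(\alpha_l)_E}\sup_{x\in A}\mathrm{e}^{f_E(x)}\geq P_E(G,\mathcal{F};\mathcal{U})\qquad(l=1,\dots,K),
\]
and summing over $l$ yields $\sum_{l=1}^K s_l\geq K\cdot P_E(G,\mathcal{F};\mathcal{U})$.

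Next I form the common clopen refinement $\beta:=\bigvee_{l=1}^K(\alpha_l)_E$ and choose a maximizer $y_B\in B$ of $\mathrm{e}^{f_E}$ for each atom $B\in\beta$. For each pair $(l,A)$ with $A\in(\alpha_l)_E$, I fix a consistent tie-break to designate a unique sub-atom $B_A^{(l)}\subseteq A$ realizing $\sup_{x\in A}\mathrm{e}^{f_E(x)}$, and write $A_l(B)$ for the $(\alpha_l)_E$-atom containing $B$. I then set
\[
B_E:=\bigl\{\,y_B : B\in\beta\text{ and }B=B_{A_l(B)}^{(l)}\text{ for every }l=1,\dots,K\,\bigr\}.
\]
The separation property is automatic: if two distinct selected $\beta$-atoms $B,B'$ shared an $(\alpha_l)_E$-atom, both would be its unique tie-broken maximizer, contradicting $B\neq B'$. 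Hence each atom of each $(\alpha_l)_E$ contains at most one point of $B_E$.

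The main obstacle is the quantitative bound $\sum_{y\in B_E}\mathrm{e}^{f_E(y)}\geq P_E(G,\mathcal{F};\mathcal{U})/K$. I plan to establish this through the double-counting identity
\[
\sum_{l=1}^K s_l=\sum_{B\in\beta}c(B)\sup_{x\in B}\mathrm{e}^{f_E(x)},\qquad c(B):=\bigl|\{\,l:B=B_{A_l(B)}^{(l)}\,\}\bigr|\in\{0,\dots,K\},
\]
combined with $\sum_{l=1}^K s_l\geq K\cdot P_E(G,\mathcal{F};\mathcal{U})$ and the observation that $c(B)=K$ exactly when $y_B\in B_E$. The delicate step is controlling the contribution from partially matched atoms ($0<c(B)<K$); I expect this to follow either from a refined (possibly randomized) tie-breaking scheme that spreads the mass evenly across the $K$ partitions, or from a Hall-type matching argument on the bipartite incidence between $\beta$-atoms and the pairs $(l,A)$. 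The zero-dimensionality of $X$ and the clopenness of each $\alpha_l$ are essential here, since they permit the maximizers on different $\beta$-atoms to be chosen compatibly across all $K$ partitions simultaneously.
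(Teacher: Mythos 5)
Your setup is fine as far as it goes: each $(\alpha_l)_E$ is a clopen partition refining $\mathcal{U}_E$, so $s_l\geq P_E(G,\mathcal{F};\mathcal{U})$, suprema are attained on the compact clopen atoms, and your designation scheme does give the separation property. But the quantitative bound is precisely where the lemma lives, and the selection rule you commit to --- keep $y_B$ only when $B$ is the designated maximizer inside \emph{all} $K$ of its containing atoms --- is not merely unproven but unrepairable: no tie-breaking scheme, randomized or not, can help, because one can make every maximizer unique. Take $K=2$ and $E=\{e_G\}$ (the statement is then purely static; realize $\mathcal{F}$ as the additive potential $f_E=\sum_{g\in E}f\circ g$). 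Let $X$ be a disjoint union of clopen blocks $B_1,\dots,B_{2m}$ with $\mathrm{e}^{f}\equiv v_i$ on $B_i$, where $1\leq v_{2m}<\dots<v_1\leq 1+\delta$; let $\mathcal{U}$ consist of the pairs $B_1\cup B_2,\,B_3\cup B_4,\dots$ together with $B_2\cup B_3,\dots,B_{2m}\cup B_1$ (a cycle), and let $\alpha_1,\alpha_2$ be the two pairings, each a clopen partition whose atoms are elements of $\mathcal{U}$. All designations are forced by strict inequalities, and the only block maximal in both of its pairs is $B_1$; so your $B_E$ is a single point of weight $v_1\leq 1+\delta$, while any Borel cover refining $\mathcal{U}$ needs at least $m$ nonempty elements (each element meets at most two blocks), each with $\sup\mathrm{e}^{f}\geq 1$, whence $P_E\geq m$ and the required bound $P_E/2$ fails by an arbitrarily large factor. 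Your double-counting identity is correct but toothless in such examples: the mass of $\sum_{l}s_l$ sits on atoms with $c(B)=1$, and neither ``even spreading'' nor a Hall-type matching has a clear target, since the inequality to be proved compares with the \emph{infimum} $P_E$ over all refining Borel covers, not with $\sum_l s_l$.

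The paper defers to Huang--Yi (Lemma 4.4), whose argument selects greedily by \emph{global} rather than atomwise maximality and --- this is the idea you are missing --- simultaneously builds a comparison cover that witnesses the bound. Set $X_1=X$; having chosen $x_1,\dots,x_{i-1}$, let $x_i$ maximize $f_E$ on the clopen, hence compact, set $X_i$ (this is where zero-dimensionality enters), and put $X_{i+1}=X_i\setminus\bigcup_{l=1}^{K}A_l(x_i)$, where $A_l(x)$ is the $(\alpha_l)_E$-atom of $x$; the process terminates at some $X_{m+1}=\emptyset$ because each step deletes at least the $\bigvee_{l=1}^K(\alpha_l)_E$-atom of $x_i$. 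Separation is immediate, since $x_j\in X_j\subseteq X\setminus A_l(x_i)$ for all $j>i$ and all $l$. Now the trimmed sets $\left\{A_l(x_i)\cap X_i:\ 1\leq i\leq m,\ 1\leq l\leq K\right\}$ form a finite Borel cover of $X$ (as $X=\bigsqcup_{i}(X_i\setminus X_{i+1})$) which refines $\mathcal{U}_E$, and on each of them $\sup\mathrm{e}^{f_E}\leq\mathrm{e}^{f_E(x_i)}$ by the greedy choice of $x_i$. Hence, by the definition of $P_E(G,\mathcal{F};\mathcal{U})$ as an infimum over covers refining $\mathcal{U}_E$ (or by Lemma 2.3),
\begin{equation*}
P_E(G,\mathcal{F};\mathcal{U})\ \leq\ \sum_{i=1}^{m}\sum_{l=1}^{K}\mathrm{e}^{f_E(x_i)}\ =\ K\sum_{x\in B_E}\mathrm{e}^{f_E(x)},\qquad B_E=\{x_1,\dots,x_m\}.
\end{equation*}
The trimming by $X_i$ is exactly what guarantees that every covering set is dominated by its selected point --- the property your simultaneous-maximality selection cannot ensure.
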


\begin{proof}
The proof follows completely from that of \cite[Lemma 4.4]{HYi} and
is omitted.
\end{proof}

Let $(X, G)$ and $(Y, G)$ be two $G$-systems. A  continuous map
$\pi: X \rightarrow Y$ is called a {\bf homomorphism} or a {\bf
factor map} from $(X, G)$ to $(Y, G)$ if it is onto and $\pi \circ
g=g \circ \pi$ for all $g \in G$. $(X, G)$ is called an {\bf
extension} of $(Y, G)$ and $(Y, G)$ is called a {\bf factor} of $(X,
G)$. If $\pi$ is also injective then it is called an {\bf
isomorphism}.

\begin{lem}
Let $\pi :(X,G) \rightarrow (Y,G)$ be a factor map,
$\mathcal{F}=\{f_E\}_{E \in \mathcal{F}(G)} \subseteq C(Y)$
satisfies conditions (C1), (C2) and (C3). Let $\mu \in {\mathcal
M}(X, G)$, $\nu=\pi \mu$, $\alpha \in {\mathcal P}_Y$ and $\mathcal
{U}\in \mathcal {C}_Y^{o}$. Then

{\rm (1)} $h_{\mu}(G, \pi^{-1}(\alpha))=h_{\nu}(G, \alpha)$;

{\rm (2)} $P(G,\mathcal {F}\circ \pi; \pi^{-1}\mathcal {U})=
P(G,\mathcal {F};\mathcal {U})$;
\end{lem}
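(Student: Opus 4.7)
\medskip

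\textbf{Proof plan for Lemma 3.4.}

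\smallskip

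The whole argument rests on two structural identities that follow from the commutation $\pi \circ g = g \circ \pi$ (for every $g \in G$) and from the surjectivity of $\pi$. First, for any cover or partition $\gamma$ of $Y$ and any $F \in \mathcal{F}(G)$, one has $(\pi^{-1}\gamma)_F = \pi^{-1}(\gamma_F)$, since $g^{-1}\pi^{-1}(A) = \pi^{-1}(g^{-1}A)$. Second, because $\pi$ is onto, for any Borel $B \subseteq Y$ and any continuous function $\varphi$ on $Y$, $\sup_{x \in \pi^{-1}(B)} \varphi(\pi(x)) = \sup_{y \in B} \varphi(y)$. I would start by recording these identities explicitly.

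\smallskip

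For part (1), the first identity gives $(\pi^{-1}\alpha)_{F_n} = \pi^{-1}(\alpha_{F_n})$. Since $\nu = \pi\mu$, for every atom $A \in \alpha_{F_n}$ we have $\mu(\pi^{-1}A) = \nu(A)$, so
\begin{equation*}
H_\mu\bigl((\pi^{-1}\alpha)_{F_n}\bigr) \;=\; H_\mu\bigl(\pi^{-1}(\alpha_{F_n})\bigr) \;=\; H_\nu(\alpha_{F_n}).
\end{equation*}
Dividing by $|F_n|$ and letting $n \to \infty$ along a F{\o}lner sequence yields $h_\mu(G,\pi^{-1}\alpha) = h_\nu(G,\alpha)$.

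\smallskip

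For part (2), I would again use $(\pi^{-1}\mathcal{U})_E = \pi^{-1}(\mathcal{U}_E)$ and show $P_E(G,\mathcal{F}\circ\pi;\pi^{-1}\mathcal{U}) = P_E(G,\mathcal{F};\mathcal{U})$ for every $E \in \mathcal{F}(G)$; the conclusion then follows after taking logs, dividing by $|F_n|$, and using the definition (\ref{eq:pressure}). The inequality $\leq$ is immediate: any cover $\mathcal{W}$ of $Y$ refining $\mathcal{U}_E$ lifts to $\pi^{-1}\mathcal{W}$ which refines $(\pi^{-1}\mathcal{U})_E$, and by surjectivity
\begin{equation*}
\sum_{W \in \mathcal{W}} \sup_{x \in \pi^{-1}W} e^{f_E(\pi(x))} \;=\; \sum_{W \in \mathcal{W}} \sup_{y \in W} e^{f_E(y)}.
\end{equation*}
For the reverse inequality I would invoke Lemma 2.3, which replaces the infimum by a minimum over $\mathcal{P}^*((\pi^{-1}\mathcal{U})_E)$. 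The key observation is that the Borel partition of $X$ generated by $\pi^{-1}\mathcal{U}_E$ coincides with the $\pi$-preimage of the Borel partition of $Y$ generated by $\mathcal{U}_E$ (each atom is a nonempty intersection of members of the cover and its complements, which commutes with $\pi^{-1}$, and nonemptiness is preserved by surjectivity). Consequently, every $\beta \in \mathcal{P}^*((\pi^{-1}\mathcal{U})_E)$ is of the form $\pi^{-1}\beta'$ for a unique $\beta' \in \mathcal{P}^*(\mathcal{U}_E)$, and the surjectivity identity makes the two sums equal. This gives $\geq$ and completes the proof.

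\smallskip

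There is no serious obstacle: the argument is a bookkeeping exercise in pushing $\pi^{-1}$ through covers, partitions, the $G$-action, and suprema. The only point that requires a little care is the bijective correspondence $\mathcal{P}^*(\pi^{-1}\mathcal{U}_E) \leftrightarrow \mathcal{P}^*(\mathcal{U}_E)$ via $\pi^{-1}$, where one must check that coarsenings of the generated Borel partition correspond in the expected way; this is where surjectivity of $\pi$ is actually used to ensure that no atoms collapse to the empty set.
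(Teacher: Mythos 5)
Your proposal is correct, and its skeleton matches the paper's: part (1) is the same standard computation (the paper dismisses it as obvious), and for part (2) both you and the paper fix a F\o lner set $F_n$, prove the identity $P_{F_n}(G,\mathcal{F}\circ\pi;\pi^{-1}\mathcal{U})=P_{F_n}(G,\mathcal{F};\mathcal{U})$ at the level of each finite set, and obtain the inequality $P_{F_n}(G,\mathcal{F};\mathcal{U})\geq P_{F_n}(G,\mathcal{F}\circ\pi;\pi^{-1}\mathcal{U})$ identically, by lifting a cover $\mathcal{V}\succeq\mathcal{U}_{F_n}$ of $Y$ to $\pi^{-1}\mathcal{V}$ and using surjectivity to equate the suprema. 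Where you genuinely diverge is the reverse inequality. The paper invokes Lemma 2.3 only upstairs: it takes $\beta\in\mathcal{P}^*((\pi^{-1}\mathcal{U})_{F_n})$ and pushes it \emph{forward}, observing that $\pi(\beta)$ is a cover of $Y$ (in general \emph{not} a partition) refining $\mathcal{U}_{F_n}$, with $\sup_{x\in B}\mathrm{e}^{f_{F_n}\circ\pi(x)}=\sup_{y\in\pi(B)}\mathrm{e}^{f_{F_n}(y)}$, so the infimum-over-covers definition of $P_{F_n}(G,\mathcal{F};\mathcal{U})$ finishes at once, with no need to track partitions downstairs. You instead pull \emph{back}, setting up the bijection $\mathcal{P}^*((\pi^{-1}\mathcal{U})_{F_n})\leftrightarrow\mathcal{P}^*(\mathcal{U}_{F_n})$ via $\pi^{-1}$; this requires Lemma 2.3 on both sides and the atomwise correspondence of the generated Borel partitions, where surjectivity is needed twice --- to keep atoms from collapsing to $\emptyset$, and to deduce $B'\subseteq V$ from $\pi^{-1}(B')\subseteq\pi^{-1}(V)$ --- points you correctly flag. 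Your route costs more bookkeeping but yields a slightly sharper structural fact (the two minimized families correspond termwise, so the minima agree term by term rather than via matched inequalities); the paper's push-forward is leaner precisely because $\pi(\beta)$ only needs to be a cover. Both arguments are complete and correct.
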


\begin{proof}
(1) It is an obvious fact.

(2) Let $\{F_n\}_{n \in {\mathbb N}}$ be a F$\phi$lner sequence of
$G$. Fix an $n \in {\mathbb N}$. If $\mathcal{V} \in \mathcal{C}_Y$
with $\mathcal{V} \succeq \U_{F_n}$, then $\pi^{-1}\mathcal{V} \in
\mathcal{C}_X$ and $\pi^{-1}\mathcal{V} \succeq (\pi^{-1}\U)_{F_n}$.
Hence
$$\sum_{V \in \mathcal{V}}\sup_{y\in V}\mathrm{e}^{f_{F_n}(y)}
=\sum_{V\in \mathcal{V}}\sup_{z\in
\pi^{-1}V}\mathrm{e}^{f_{F_n}\circ \pi (z)} \geq P_{F_n}(G,\mathcal
{F}\circ \pi;\pi^{-1}\mathcal {U}).$$ Since $\mathcal{V}$ is
arbitrary, we have that $P_{F_n}(G,\mathcal {F};\mathcal {U})\geq
P_{F_n}(G,\mathcal {F}\circ \pi;\pi^{-1}\mathcal{U})$.

Conversely, we note that
$$P_{F_n}(G,\mathcal{F}\circ
\pi;\pi^{-1}\mathcal{U})=\min_{\beta\in
\mathcal{P}^*((\pi^{-1}\mathcal{U})_{F_n})}\left \{\sum_{B\in
\beta}\sup_{x\in B}\mathrm{e}^{f_{F_n}(x)}\right \}.$$ Let $\beta\in
\mathcal{P}^*((\pi^{-1}\mathcal{U})_{F_n})$, then $\pi(\beta)\in
\mathcal{C}_Y,\ \pi(\beta)\succeq \mathcal{U}_{F_n}$, and thus
$$\sum_{B\in \beta}\sup_{x\in B}\mathrm{e}^{f_{F_n}\circ \pi(x)}
=\sum_{B\in \beta} \sup_{y\in \pi (B)}\mathrm{e}^{f_{F_n}(y)} \geq
P_{F_n}(G,\mathcal{F};\mathcal{U}).$$ Since $\beta$ is arbitrary,
$P_{F_n}(G,\mathcal {F}\circ \pi;\pi^{-1}\mathcal {U})\geq
P_{F_n}(G,\mathcal {F};\mathcal {U})$.

Above all, $P_{F_n}(G,\mathcal {F}\circ \pi;\pi^{-1}\circ\mathcal
{U})= P_{F_n}(G,\mathcal {F};\mathcal {U})$ for each $n\in
\mathbb{N}$, from which the lemma follows.
\end{proof}

For a fixed ${\mathcal U}=\{U_1, \cdots, U_M\} \in {\mathcal
C}_X^o$, we let
$$\mathcal{U}^*=\left\{\{A_1, \cdots, A_M\} \in {\mathcal P}_X:
A_m \subseteq U_m: 1 \leq m \leq M\right\}.$$ The following lemma
will be used in the computation of $H_{\mu}({\mathcal U})$ and
$h_{\mu}(T, \mathcal{U})$ (see \cite[Lemma 2]{HM} for detail).

\begin{lem}
Let $H: {\mathcal P}_X \rightarrow {\mathbb R}$ be monotone in the
sense that $H(\alpha) \geq H(\beta)$ whenever $\alpha \succeq
\beta$. Then
$$\inf_{\alpha\in \mathcal {P}_X,
\alpha\succeq \mathcal {U}}H(\alpha)=\inf_{\alpha\in \mathcal
{U}^{*}}H(\alpha).$$
\end{lem}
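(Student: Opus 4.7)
The plan is to prove the two inequalities separately, with the nontrivial direction reducing to an explicit combinatorial construction.

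First I would dispatch the inequality $\inf_{\alpha\succeq\mathcal{U}}H(\alpha)\leq \inf_{\alpha\in\mathcal{U}^*}H(\alpha)$, which is immediate from the observation that $\mathcal{U}^*$ is contained in $\{\alpha\in\mathcal{P}_X:\alpha\succeq\mathcal{U}\}$. Indeed, any $\{A_1,\dots,A_M\}\in\mathcal{U}^*$ satisfies $A_m\subseteq U_m$ by definition, so it is a partition refining $\mathcal{U}$. Hence the infimum over the smaller family is at least as large.

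The reverse inequality is the content of the lemma, and I would prove it by exhibiting, for every $\alpha\in\mathcal{P}_X$ with $\alpha\succeq\mathcal{U}$, an element $\beta\in\mathcal{U}^*$ with $\alpha\succeq\beta$; monotonicity of $H$ will then give $H(\alpha)\geq H(\beta)\geq \inf_{\mathcal{U}^*}H$, and taking infimum over $\alpha$ finishes the proof. To build $\beta$, enumerate $\mathcal{U}=\{U_1,\dots,U_M\}$ and for each atom $A\in\alpha$ let
$$i(A)=\min\{m:1\leq m\leq M,\ A\subseteq U_m\},$$
which is well defined since $\alpha\succeq\mathcal{U}$. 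Setting
$$A_m=\bigcup\{A\in\alpha:i(A)=m\}\qquad (1\leq m\leq M)$$
yields a finite family of pairwise disjoint Borel sets whose union is $X$; moreover $A_m\subseteq U_m$, so $\beta:=\{A_1,\dots,A_M\}\in\mathcal{U}^*$ (discarding any empty $A_m$ if one insists on non-empty atoms does not change things since $H$ is computed on partitions). Since every atom $A$ of $\alpha$ lies in $A_{i(A)}$, we have $\alpha\succeq\beta$, as required.

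There is essentially no obstacle here: once the assignment $A\mapsto i(A)$ is made (the smallest index is a convenient choice, but any selection would work), everything follows from the definitions and the monotonicity hypothesis. The only point one has to be a little careful about is that $\mathcal{U}^*$ is non-empty and that the produced $\beta$ is genuinely a finite Borel partition, both of which are immediate from the construction above.
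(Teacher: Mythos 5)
Your proof is correct and follows essentially the same route as the paper, which itself omits the argument and refers to \cite[Lemma 2]{HM}: there, as in your proposal, one coarsens an arbitrary $\alpha \succeq \mathcal{U}$ into an element of $\mathcal{U}^*$ by assigning each atom of $\alpha$ to a chosen $U_m$ containing it and invoking monotonicity of $H$, the converse inequality being trivial since $\mathcal{U}^* \subseteq \{\alpha \in \mathcal{P}_X : \alpha \succeq \mathcal{U}\}$. Your remark about possibly empty $A_m$ correctly handles the one small point of care in the construction.
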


\begin{lem}
Let $(X, G)$ be $G$-system, where $G$ is a Abelian group. Suppose
$\{\nu_n\}_{n=1}^{\infty}$ is a sequence in ${\mathcal M}(X)$ and
$\{F_n\}_{n=1}^{\infty}$ is a tiling F$\phi$lner sequence of $G$. We
form the new sequence $\{\mu_n\}_{n=1}^{\infty}$ by
$\mu_n=\frac{1}{|F_n|}\sum_{g \in F_n}g\nu_n$. Assume that
$\mu_{n_i}$ converges to $\mu$ in ${\mathcal M}(X)$ for some
subsequence $\{n_i\}$ of natural numbers. Then $\mu \in {\mathcal
M}(X, G)$, and moreover
\begin{equation} \label{eq:5}
\limsup_{i \rightarrow \infty}\frac{1}{|F_{n_i}|}\int \,
f_{F_{n_i}}\, \mathrm{d}\nu_{n_i} \leq {\mathcal F}_*(\mu).
 \end{equation}
\end{lem}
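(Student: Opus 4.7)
The plan is first to establish that $\mu$ is $G$-invariant, and then to prove, for every fixed $k\in\mathbb{N}$, the bound
\begin{equation*}
\limsup_{i\to\infty}\frac{1}{|F_{n_i}|}\int f_{F_{n_i}}\,\mathrm{d}\nu_{n_i}\leq\frac{1}{|F_k|}\int f_{F_k}\,\mathrm{d}\mu.
\end{equation*}
Taking the infimum over $k$ then yields the lemma, because Remark 2.2(2) (for tiling F$\phi$lner sequences) applied to the monotone non-negative $G$-invariant sub-additive function $E\mapsto\int f_E\,\mathrm{d}\mu+C|E|$ furnished by Lemma 2.4(2) gives $\mathcal{F}_*(\mu)=\inf_k\frac{1}{|F_k|}\int f_{F_k}\,\mathrm{d}\mu$.

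Invariance is a routine F$\phi$lner averaging: for each $h\in G$ one has $\|h\mu_n-\mu_n\|_{\mathrm{TV}}\leq|hF_n\triangle F_n|/|F_n|\to 0$, so $h\mu=\mu$.

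For the main bound, fix $k$ and write the group operation of $G$ additively. Since $F_k$ is a tile of the Abelian group $G$, choose $D\subseteq G$ with $G=\bigsqcup_{d\in D}(F_k+d)$. For each $c\in F_k$ the shifted set $D+c$ is again a set of tiling centers, and the key Abelian identity
\begin{equation*}
G=\bigsqcup_{c\in F_k}(D+c)
\end{equation*}
holds because each $g\in G$ has a unique decomposition $g=d+c$ with $d\in D$ and $c\in F_k$. For each $n$ and $c\in F_k$ set $E_{n,c}=\{d\in D+c:(F_k+d)\cap F_n\neq\emptyset\}$, so that $F_n=\bigsqcup_{d\in E_{n,c}}((F_k+d)\cap F_n)$. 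Applying (C1) to this disjoint decomposition, then (C3) to enlarge each piece $(F_k+d)\cap F_n$ to the full tile $F_k+d$, and finally (C2), one obtains
\begin{equation*}
f_{F_n}(x)\leq\sum_{d\in E_{n,c}}f_{F_k}(dx)+C\,\bigl|(F_k+E_{n,c})\setminus F_n\bigr|,
\end{equation*}
with an error term that is $o(|F_n|)$ uniformly in $c\in F_k$ because $F_k+E_{n,c}\subseteq F_n+F_k-F_k$ and $|(F_n+F_k-F_k)\setminus F_n|=o(|F_n|)$ by the F$\phi$lner property.

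Averaging over $c\in F_k$, the partition identity gives $\bigsqcup_{c\in F_k}E_{n,c}=F_n-F_k$, which combined with $|(F_n-F_k)\triangle F_n|=o(|F_n|)$ and the uniform bound $\sup|f_{F_k}|\leq K|F_k|$ from Lemma 2.4(1) produces the pointwise estimate
\begin{equation*}
f_{F_n}(x)\leq\frac{1}{|F_k|}\sum_{g\in F_n}f_{F_k}(gx)+\tilde\epsilon_n,\qquad\tilde\epsilon_n/|F_n|\to 0.
\end{equation*}
Integrating against $\nu_n$ and recognising that $\frac{1}{|F_n|}\sum_{g\in F_n}g\nu_n=\mu_n$ gives
\begin{equation*}
\frac{1}{|F_n|}\int f_{F_n}\,\mathrm{d}\nu_n\leq\frac{1}{|F_k|}\int f_{F_k}\,\mathrm{d}\mu_n+o(1),
\end{equation*}
and passing to the limit along $\{n_i\}$ via the weak$^*$ convergence $\mu_{n_i}\to\mu$ and continuity of $f_{F_k}$ completes the argument. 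The main obstacle is precisely the averaging step: the identity $G=\bigsqcup_{c\in F_k}(D+c)$, which converts a single tiling of $F_n$ by translates of $F_k$ into an average running over all of $F_n$ (rather than only over the $|F_n|/|F_k|$ tiling centers), is a genuinely Abelian phenomenon and has no direct analogue in non-commutative groups.
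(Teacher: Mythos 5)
Your proof is correct, and it reorganizes the paper's argument in a genuinely different way while using the same two essential ingredients: the tile property of $F_k$ from the tiling F\o lner sequence, and commutativity to merge a double index into a single sum. The paper keeps one tiling $C_n$ of $F_n$ fixed and instead averages translates of the \emph{big} set, introducing $g_{F_n}=\frac{1}{|F_k|}\sum_{a\in F_k}f_{F_na}$, using the Abelian swap to rewrite $\sum_{a\in F_k,\,c\in C_n}f_{F_kca}$ as $\sum_{g\in F_kC_n}f_{F_kg}$, and then paying for this with an auxiliary measure $\widetilde{\mu}_n=\frac{1}{|F_kC_n|}\sum_{g\in F_kC_n}g\nu_n$ and two claims (an $L^1(\nu_n)$-comparison of $f_{F_n}$ with $g_{F_n}$, and weak$^*$ convergence $\widetilde{\mu}_{n_i}\to\mu$). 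You instead keep $F_n$ fixed and average over the $|F_k|$ \emph{shifted tilings} $\{F_k+d:d\in D+c\}$, $c\in F_k$; your identity $\bigsqcup_{c\in F_k}E_{n,c}=F_n-F_k$ — where commutativity enters, playing exactly the role of the paper's index swap — turns the average directly into $\frac{1}{|F_k|}\sum_{g\in F_n}f_{F_k}(g\,\cdot)$ up to F\o lner-negligible errors controlled by (C3) and the bound $\sup|f_{F_k}|\le K|F_k|$ of Lemma 2.4(1). This yields a pointwise inequality whose integration against $\nu_n$ lands on $\mu_n$ itself, so no auxiliary measure and no $L^1$-comparison claim are needed, and you avoid the paper's ``WLOG $\mathcal F$ nonnegative and monotone'' normalization in the main estimate, deferring Lemma 2.4(2) and Remark 2.2(2) to the final identification $\inf_k\frac{1}{|F_k|}\int f_{F_k}\,\mathrm{d}\mu=\mathcal F_*(\mu)$ (a step the paper also relies on, though it leaves it implicit after its Claim 2). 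The trade-off is mild: the paper's two-claim structure isolates reusable sub-statements, while your version is shorter and makes the single, unavoidable use of the Abelian hypothesis more transparent.
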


\begin{proof}
The statement $\mu \in \mathcal{M}(X, G)$ is well-known. Now we show
the desired inequality. Fix $k \in {\mathbb N}$. Since $F_k$ is a
tile of $G$, let $C_n$ is a tiling center of $F_n$ relative to
$F_k$, i.e.,
\begin{equation} \label{eq:5-1}
\bigsqcup_{c\in C_n}F_kc \supseteq F_n \mbox{\ \ and \ \ } F_kc\cap
F_n\neq \emptyset,\ \forall~ c\in C_n, n \in {\mathbb N}.
 \end{equation}
By Lemma 3.1, for each $\epsilon>0$, when $n$ large enough, we have
\begin{equation} \label{eq:5-2}
|F_kC_n|\leq|(1+\epsilon)|F_n| \mbox{\ \ and\ \ } |F_kC_n\setminus
F_n|\leq \epsilon|F_n|.
 \end{equation}
Without loss of generality, we can assume $\mathcal {F}$ is
nonnegative monotone, then
$$f_{F_na}(x)\leq f_{\bigsqcup_{c\in C_n}F_kca}(x)
\leq \sum_{c\in C_n}f_{F_kca}(x),\ \ \ \forall a\in F_k.$$ Set
$g_{_{F_n}}(x)=\frac{1}{|F_k|}\sum_{a\in F_k}f_{F_na}(x)$. Since $G$
is a Abelian group, we have
$$g_{_{F_n}}(x)\leq \frac{1}{|F_k|}\sum_{a\in F_k,c\in C_n}f_{F_kca}(x)
=\frac{1}{|F_k|}\sum_{g\in F_kC_n}f_{F_kg}(x).$$ Moreover, by
\eqref{eq:5-2}, we can get
\begin{eqnarray*}
\frac{1}{|F_n|}\int_X g_{_{F_n}}(x)d\nu_n(x)
&\leq&\frac{1}{|F_n||F_k|}\int_X \sum_{g\in
F_kC_n}f_{F_kg}(x)d\nu_n(x)\\
&=&\frac{|F_kC_n|}{|F_n||F_k|}\int_X
f_{F_k}(x)d\widetilde{\mu}_n(x)\\
&\leq&\frac{1+\epsilon}{|F_k|}\int_X
f_{F_k}(x)d\widetilde{\mu}_n(x),
\end{eqnarray*}
where $\widetilde{\mu}_n=\frac{1}{|F_kC_n|}\sum_{g\in
F_kC_n}g\nu_n$. To complete the lemma, it suffices to show the
following two claims hold.

\vspace{2mm}

{\bf Claim 1.} $\lim \limits_{n \rightarrow
\infty}\displaystyle{\frac{1}{|F_n|}} \displaystyle{\int_X \,
\left|f_{F_n}(x)-g_{F_n}(x)\right|\, \mathrm{d}\nu_n(x)}=0$;

\vspace{2mm}

{\bf Proof of Claim 1.} Since $\F$ is nonnegative, monotone and
sub-additive, for each $a \in F_k$,
$$f_{F_n}(x) \leq f_{F_na}(x)+f_{F_n\setminus F_na}(x) \leq
f_{F_na}(x)+ K \cdot |F_n\setminus F_na|.$$ By symmetry,
$|f_{F_n}(x)-f_{F_na}(x)|\leq K \cdot |F_n\bigtriangleup F_na|$.
Thus,
$$
|f_{F_n}(x)-g_{_{F_n}}(x)|=\left|\frac{1}{|F_k|}\sum_{a\in
F_k}f_{F_n}(x)-f_{F_na}(x)\right| \leq \frac{K}{|F_k|}\sum_{a\in
F_k}|F_n\bigtriangleup F_na|.
$$
Therefore,
$$\frac{1}{|F_n|}\int_X \, |f_{F_n}(x)-g_{_{F_n}}(x)| \, \mathrm{d}\nu_n
\leq \frac{K}{|F_k|}\sum_{a\in F_k}\frac{|F_n\bigtriangleup
F_na|}{|F_n|}\rightarrow 0 \ \ (n\rightarrow \infty)\text{¡£}$$ This
complete the proof of Claim 1. \hspace*{\fill} $\square$

\vspace{2mm}

{\bf Claim 2.} with the weak$^*$-topology, $\widetilde{\mu}_{n_i}
\rightarrow \mu$.

\vspace{2mm}

{\bf Proof of Claim 2.} It suffices to show that for each $f\in
C(X)$,
\begin{equation} \label{eq:5-3}
\left| \int_X \, f \, \mathrm{d}\mu_n-\int_X \, f \,
\mathrm{d}\widetilde{\mu}_n \right| \rightarrow 0 \ \ (n\rightarrow
\infty)\text{¡£}
 \end{equation}
By Lemma 3.1,\ $\lim_{n \rightarrow \infty}
\frac{|F_kC_n|}{|F_n|}=1$. So
\begin{eqnarray*}
&& \lim_{n \rightarrow \infty}\left|\int_X \, f \, \mathrm{d}\mu_n-
\int_X \, f \, \mathrm{d}\widetilde{\mu}_n\right|\\
&=& \lim_{n \rightarrow \infty}\left|\frac{1}{|F_n|}\sum_{g\in
F_n}\int_X \, f(gx)\,
\mathrm{d}\nu_n(x)-\frac{1}{|F_kC_n|}\sum_{g\in
F_kC_n}\int_X \, f(gx) \, \mathrm{d}\nu_n(x)\right|\\
&=& \lim_{n \rightarrow \infty} \left|\frac{1}{|F_n|}\sum_{g\in
F_n}\int_X \, f(gx)\, \mathrm{d}\nu_n(x)-\frac{1}{|F_n|}\sum_{g\in
F_kC_n}\int_X \, f(gx) \, \mathrm{d}\nu_n(x)\right|\\
&\leqslant& \lim_{n \rightarrow \infty}\frac{|F_kC_n \setminus
F_n|}{|F_n|} \cdot \|f\|=0
\end{eqnarray*}
This complete the proof of Claim 2.
\end{proof}

The following lemma is well known (see \cite[$\S 9$]{Wal} for a
proof).

\begin{lem}
Let $a_1, a_2, \cdots, a_k$ be given real numbers. If $p_i \geq 0,
i=1, 2, \cdots, k$ and $\sum_{i=1}^k p_i=1$, then
\begin{equation}\label{eq:3}
\sum_{i=1}^k p_i \left(a_i-\log p_i\right) \leq \log
\left(\sum_{i=1}^k \mathrm{e}^{a_i}\right),
\end{equation}
and equality holds if and only if
$p_i=\frac{\mathrm{e}^{a_i}}{\sum_{i=j}^k \mathrm{e}^{a_j}}$ for all
$i=1, 2, \cdots, k$.
\end{lem}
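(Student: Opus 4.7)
The plan is to reduce the stated inequality to the elementary bound $\log x \leq x - 1$ (valid for $x > 0$, with equality iff $x = 1$) by a suitable rewriting of the left-hand side in terms of the putative optimum. This converts the problem into a clean computation of a relative-entropy-type quantity, and both the inequality and its equality case fall out simultaneously.

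Concretely, set $S := \sum_{j=1}^k e^{a_j}$ and $q_i := e^{a_i}/S$, so that $q_i > 0$ for every $i$ and $\sum_i q_i = 1$. Using the convention $0 \log 0 = 0$, I would first observe that for each index with $p_i > 0$,
\[
p_i(a_i - \log p_i) = p_i \log\!\frac{e^{a_i}}{p_i} = p_i \log S + p_i \log\!\frac{q_i}{p_i},
\]
and that indices with $p_i = 0$ contribute zero to the left-hand side of \eqref{eq:3}. Summing and using $\sum_i p_i = 1$ yields
\[
\sum_{i=1}^k p_i (a_i - \log p_i) = \log S + \sum_{i:\, p_i > 0} p_i \log\!\frac{q_i}{p_i}.
\]

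Next I would apply $\log x \leq x - 1$ to each term $\log(q_i/p_i)$ with $p_i > 0$, obtaining
\[
\sum_{i:\, p_i > 0} p_i \log\!\frac{q_i}{p_i} \leq \sum_{i:\, p_i > 0} (q_i - p_i) = \sum_{i:\, p_i > 0} q_i - 1 \leq 0,
\]
since $\sum_{i:\, p_i > 0} q_i \leq \sum_i q_i = 1$. Combining with the previous identity gives the claimed bound $\sum_i p_i(a_i - \log p_i) \leq \log S$.

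For equality, the inequality $\log(q_i/p_i) \leq q_i/p_i - 1$ is saturated precisely when $q_i = p_i$, while $\sum_{i:\, p_i > 0} q_i = 1$ forces $q_i = 0$ whenever $p_i = 0$; since $q_i = e^{a_i}/S > 0$ for every $i$, this last condition rules out any vanishing $p_i$. Thus equality forces $p_i = q_i = e^{a_i}/\sum_{j} e^{a_j}$ for all $i$, and conversely this choice makes every step an equality. There is no real obstacle here — the only point requiring slight care is the handling of the indices with $p_i = 0$, which is dispatched by the standard convention $0 \log 0 = 0$ and the observation that $q_i$ is always strictly positive.
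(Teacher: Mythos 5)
Your proof is correct. Note that the paper itself gives no argument for this lemma: it simply cites Walters (\emph{An Introduction to Ergodic Theory}, \S 9), where the standard proof runs through concavity of the logarithm (Jensen's inequality): writing the left-hand side as $\sum_{i:\,p_i>0} p_i \log\bigl(\mathrm{e}^{a_i}/p_i\bigr)$, concavity bounds this by $\log\bigl(\sum_{i:\,p_i>0} \mathrm{e}^{a_i}\bigr) \leq \log\bigl(\sum_{i=1}^k \mathrm{e}^{a_i}\bigr)$, with equality forcing $\mathrm{e}^{a_i}/p_i$ to be constant and all $p_i$ to be positive. Your route through the tangent-line bound $\log x \leq x-1$ is the pointwise form of that same concavity statement --- it is exactly how one proves Gibbs' inequality $\sum_i p_i \log(q_i/p_i) \leq 0$ --- so the two arguments are essentially equivalent in substance. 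What yours buys is complete self-containedness (no appeal to Jensen) and a particularly transparent equality analysis: saturation of $\log x \leq x-1$ at $x=1$ immediately pins $p_i=q_i$ on the support of $p$, and your observation that $q_i = \mathrm{e}^{a_i}/S > 0$ for \emph{every} $i$ cleanly rules out vanishing $p_i$, a point that in the Jensen version is hidden in the discarded $p_i=0$ terms. Your handling of the boundary cases via the convention $0\log 0 = 0$ is exactly right. One cosmetic remark: the equality condition as printed in the paper, $p_i=\mathrm{e}^{a_i}/\sum_{i=j}^k \mathrm{e}^{a_j}$, contains a typo (the sum should run over $j=1,\dots,k$); your conclusion $p_i=\mathrm{e}^{a_i}/\sum_{j=1}^k \mathrm{e}^{a_j}$ is the intended statement.
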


\subsection{Proof of Theorem 1.1} In this section we give the proof
of Theorem 1.1.
\\
\\
{\it Proof of Theorem 1.1} We divide the proof into three small
steps:

\vspace{2mm}

{\bf Step 1.} $P(G, {\mathcal F};{\mathcal U}) \geq h_{\mu}(T,
{\mathcal U})+{\mathcal F}_*(\mu)$ for all $\mu \in {\mathcal M}(X,
G)$.

Let $\mu \in {\mathcal M}(X, G)$ and $\{F_n\}_{n \in {\mathbb N}}$ a
F$\phi$lner sequence of $G$. By \eqref{eq:2}, there exists a finite
partition $\beta \in {\mathcal P}^*({\mathcal U}_{F_n})$ such that
$$P_{F_n}(G, {\mathcal F}, {\mathcal U})=\sum_{B \in \beta}
\sup_{x \in B} \mathrm{e}^{f_{F_n}(x)}.$$ It follows from Lemma 3.7
that
\begin{eqnarray*}
\log P_{F_n}(G, {\mathcal F}, {\mathcal U})&=& \log\left(\sum_{B \in
\beta} \sup_{x \in B} \mathrm{e}^{f_{F_n}(x)}\right) \\
&\geq& \sum_{B \in \beta}\mu(B) \left(\sup_{x \in B}f_{F_n}(x)-\log
\mu(B)\right) \mbox{\ (by \eqref{eq:3})}\\
&=& H_{\mu}(\beta) + \sum_{B \in \beta}\sup_{x \in B}f_{F_n}(x)
\cdot \mu(B) \\
&\geqslant& H_{\mu}({\mathcal U}_{F_n}) + \int_X \, f_{F_n} \,
\mathrm{d}\mu
\end{eqnarray*}
The proof of step 1 is complete by dividing the above by $|F_n|$
then passing the limit $n \rightarrow \infty$.

\vspace{2mm}

{\bf Step 2.} If $(X, G)$ is a zero-dimensional $G$-system, then
there exists a $\mu \in \mathcal{M}(X, G)$ such that
\begin{equation} \label{eq:3-2-1}
P(G,\mathcal {F};\mathcal{U})\leq h_\mu(G,\mathcal {U})+\mathcal
 {F}_*(\mu).
\end{equation}

Let ${\mathcal U}=\{U_1, U_2, \cdots, U_d\}$ and define
$${\mathcal U}^*=\left\{\alpha \in {\mathcal P}_X: \alpha=\{A_1, A_2, \cdots, A_d\},
A_m \subseteq U_m, m=1, 2, \cdots, d\right\}.$$ Since $X$ is
zero-dimensional, the family of partitions in $\U^*$, which are
finer than $\mathcal{U}$ and consist of clopen (close and open)
sets, is countable. We let $\{\alpha_l\}_{l\in \mathbb{N}}$ denote
an enumeration of this family.

Let $\{F_n\}_{n \in {\mathbb N}}$ be a F$\phi$lner sequence of $G$
with $|F_n| \geq n$ for each $n \in {\mathbb N}$. By Lemma 3.3, for
each $n \in {\mathbb N}$, there exists a finite subset $B_n$ of $X$
such that
\begin{equation} \label{eq:7}
\sum_{x\in B_{F_n}}\mathrm{e}^{f_{F_n}(x)}\geq
\frac{{P_{F_n}(G,\mathcal {F};\mathcal {U})}}{n},
 \end{equation}
and each atom of $(\alpha_l)_{F_n}$ contains at most one point of
$B_n$, for each $l=1,\cdots,n$. Let
$$\nu_n=\sum_{x\in B_n}\lambda_n(x)\delta_x \mbox{\ \ \ and \ \ \ }
\mu_n=\frac{1}{|F_n|}\sum_{g\in F_n}g\nu_n,$$ where
$\lambda_n(x)=\frac{\mathrm{e}^{f_{F_n}(x)}}{\sum_{y\in
B_n}\mathrm{e}^{f_{F_n}(x)}}$ for $x \in B_n$. Since
$\mathcal{M}(X,G)$ is compact we can choose a subsequence $\{n_i\}
\subseteq {\mathbb N}$ such that $\mu_{n_i} \rightarrow \mu$ in the
weak$^*$-topology of $\mathcal{M}(X)$. It is easy to check $\mu \in
\mathcal{M}(X,G)$. We wish to show that $\mu$ satisfies
\eqref{eq:3-2-1}. By Lemma 3.5 and the fact that
$$h_\mu^{+}(G,\mathcal {U})
=\inf_{\beta \in \U^*}h_{\mu}(G,\beta) =\inf_{l\in
\mathbb{N}}h_{\mu}(G,\alpha_l),$$ it is sufficient to show that for
each $l\in \mathbb{N}$£¬
\begin{equation} \label{eq:8}
P(G,\mathcal {F};\mathcal{U})\leq h_\mu(G,\alpha_l)+\mathcal
 {F}_*(\mu).
 \end{equation}

Fix $l \in {\mathbb N}$. For each $n>l$, we know from the
construction of $B_n$ that each atom of $(\alpha_l)_{F_n}$ contains
at most one point of $B_n$, and
\begin{equation} \label{eq:9}
\sum_{x \in B_n}-\lambda_n(x)\log \lambda_n(x)=\sum_{x \in
B_n}-\nu_n(\{x\})\log \nu_n(\{x\})=H_{\nu_n}((\alpha_l)_{F_n}).
 \end{equation}
Moreover, it follows from \eqref{eq:7}, \eqref{eq:9} that
\begin{eqnarray*}
\log P_{F_n}(G,\mathcal {F};\mathcal {U})-\log
n&\leq&\log\left(\sum_{x\in
B_n}\mathrm{e}^{f_{F_n}(x)}\right)=\sum_{x\in
B_n}\lambda_n(x)(f_{F_n}(x)-\log\lambda_n(x))\\
&=&H_{\nu_n}((\alpha_l)_{F_n})+\sum_{x\in
B_n}\lambda_n(x)f_{F_n}(x)\\
&=&H_{\nu_n}((\alpha_l)_{F_n})+\int_X
f_{F_n}(x)\,\mathrm{d}\nu_n(x).
\end{eqnarray*}
Hence,
\begin{equation} \label{eq:10}
\log P_{F_n}(G,\mathcal {F};\mathcal{U})-\log n \leq
H_{\nu_n}((\alpha_l)_{F_n})+\int_X f_{F_n}(x)\,\mathrm{d}\nu_n(x)
 \end{equation}
Without loss of generality, we can assume ${\mathcal F}$ is
nonnegative, monotone and sub-additive.

\vspace{2mm}

{\it Case 1.} $G$ is Abelian group. We can assume $\{F_n\}_{n \in
\mathbb{N}}$ is a tiling F$\phi$lner sequence. Since $E\in
F(G)\mapsto H_{\nu_n}((\alpha_l)_E)$ is a nonnegative, monotone and
strongly sub-additive function, it follows from Lemma 3.2 that for
each $B \in F(G)$, one has
\begin{equation} \label{eq:11}
\begin{split}
\frac{1}{|F_n|}H_{\nu_n}((\alpha_l)_{F_n})&\leq
\frac{1}{|F_n|}\sum_{g\in
F_n}\frac{1}{|B|}H_{\nu_n}((\alpha_l)_{Bg})+\frac{|F_n\setminus
A_{F_n,B}|}{|F_n|}\cdot \log |\alpha_l|\\
&= \frac{1}{|B|}\frac{1}{|F_n|}\sum_{g\in
F_n}H_{g\nu_n}((\alpha_l)_{B})+\frac{|F_n\setminus
A_{F_n,B}|}{|F_n|}\cdot \log d\\
&\leq \frac{1}{|B|}H_{\mu_n}((\alpha_l)_{B})+\frac{|F_n\setminus
A_{F_n,B}|}{|F_n|}\cdot \log d.
 \end{split}
 \end{equation}
Set $B_1=B^{-1} \cup \{e_G\}$. Note that for each $\delta>0$, $F_n$
is $(B_1, \delta)$-invariant if $n$ is large enough and
$$F_n\setminus
A_{F_n,B}=F_n\cap B(G\setminus F_n)\subseteq
(B_1)^{-1}F_n\cap(B_1)^{-1}(G\setminus F_n)=B(F_n,B_1).$$ Letting $n
\rightarrow \infty$, we can get
\begin{equation} \label{eq:12}
\lim_{n\rightarrow\infty}\frac{|F_n\setminus A_{F_n,B}|}{|F_n|}\leq
\lim_{n\rightarrow\infty}\frac{|B(F_n,B_1)|}{|F_n|}=0.
\end{equation}
Hence, combining Lemma 3.6, \eqref{eq:10}, \eqref{eq:11} and
\eqref{eq:12} we obtain
\begin{eqnarray*}
P(G,\mathcal{F};\mathcal{U})&=&\lim_{i\rightarrow\infty}\frac{\log
P_{F_{n_i}}(G,\mathcal
{F};\mathcal{U})}{|F_{n_i}|}\\
&\leq&\limsup_{i\rightarrow\infty}\left(\frac{1}{|F_{n_i}|}H_{\nu_{n_i}}((\alpha_l)_{F_{n_i}})
+ \frac{\log n_i}{|F_{n_i}|}+\frac{1}{|F_{n_i}|}\int_X f_{F_{n_i}}(x) \,\mathrm{d}\nu_{n_i}(x)\right)\\
&\leq&\frac{1}{|B|}H_{\mu}((\alpha_l)_{B})+\mathcal{F}_*(\mu).
\end{eqnarray*}
By arbitrary of $B \in {\mathcal F}(G)$, \eqref{eq:8} holds.

\vspace{2mm}

{\it Case 2.} ${\mathcal F}$ is strongly sub-additive. Now $E \in
F(G)\mapsto \displaystyle{\int_X \, f_E(x) \,\mathrm{d}\nu_n(x)}$ is
a nonnegative monotone strongly sub-additive function. By Lemma 3.2,
for each $B \in {\mathcal F}(G)$, one has
\begin{equation} \label{eq:13}
\begin{split}
\frac{1}{|F_n|}\int_X f_{F_n}(x)\,\mathrm{d}\nu_n(x) &\leq
\frac{1}{|F_n|}\sum_{g\in F_n}\frac{1}{|B|}\int_X f_{Bg}(x)
\,\mathrm{d}\nu_n(x)+\frac{|F_n\setminus
A_{F_n,B}|}{|F_n|}\cdot K\\
&= \frac{1}{|B|}\int_X f_{B}(x)
\,\mathrm{d}\mu_n(x)+\frac{|F_n\setminus A_{F_n,B}|}{|F_n|}\cdot K
 \end{split}
 \end{equation}
Combining \eqref{eq:10}, \eqref{eq:11}, \eqref{eq:12} and
\eqref{eq:13}, we have
\begin{eqnarray*}
P(G,\mathcal{F};\mathcal{U})&=&\lim_{i \rightarrow\infty}\frac{\log
P_{F_{n_i}}(G,\mathcal
{F};\mathcal{U})}{|F_{n_i}|}\\
&\leq&
\limsup_{i\rightarrow\infty}\left(\frac{1}{|F_{n_i}|}H_{\nu_{n_i}}((\alpha_l)_{F_{n_i}})
+ \frac{\log n_i}{|F_{n_i}|}+\frac{1}{|F_{n_i}|}\int_X f_{F_{n_i}}(x) \,\mathrm{d}\nu_{n_i}(x)\right)\\
&\leq&\frac{1}{|B|}H_{\mu}((\alpha_l)_{B})+\frac{1}{|B|}\int_X
f_B(x) \,\mathrm{d}\mu(x)
\end{eqnarray*}
By arbitrary of $B \in {\mathcal F}(G)$ and (2) in Remark 2.2,
\eqref{eq:8} holds.

\vspace{2mm}

{\bf Step 3.} For $G$-system $(X, G)$, there exists a $\mu \in
\mathcal{M}(X, G)$ such that \eqref{eq:3-2-1} holds. It is well
known that there exists factor map $\pi: (Z, G) \rightarrow (X, G)$,
where $(Z, G)$ is a zero-dimensional $G$-systems (see \cite{HYZ},
for example). Using Step 2, there is $\nu \in \mathcal{M}(Z,G)$ such
that $$P(G,\mathcal{F}\circ\pi; \pi^{-1}(\U)) \leqslant h_\nu
(G,\pi^{-1}\mathcal {U})+(\mathcal
 {F}\circ\pi)_*(\nu).$$
Let $\mu=\pi_*(\nu)$. By Lemma 3.4, we can get
\begin{eqnarray*}
h_\mu (G,\mathcal {U})+\mathcal
 {F}_*(\mu)&=& \inf_{\alpha \in {\mathcal P}_X, \alpha \succeq {\mathcal
 U}}\left(h_{\mu}
(G, \alpha)+ \F*(\mu)\right)\\
&=& \inf_{\alpha\in {\mathcal P}_X, \alpha \succeq {\mathcal
 U}}\left(h_{\nu}
(G, \pi^{-1}(\alpha))+ (\F \circ \pi)*(\nu)\right)\\
&\geq& h_{\nu} (G, \pi^{-1}{\mathcal U})+ (\F \circ \pi)*(\nu)\\
&\geq& P(G,\mathcal {F}\circ\pi;\pi^{-1}(\U))= P(G,\mathcal
{F};\mathcal{U}).
\end{eqnarray*}

\vspace{2mm}

We will show the supremum of \eqref{eq:main} can be attained in
$\mathcal{M}^e(X, G)$. Let $\mu=\int_{\mathcal{M}^e(X,G)}\, \theta
\,\mathrm{d}m(\theta)$ be the ergodic decomposition of $\mu$. Note
that $\theta \mapsto {\mathcal F}_*(\theta)$ is $m$-measurable and
$\sup\{\frac{|f_{F_n}(x)|}{|F_n|}:x\in X, F_n\in {\mathcal
F}(G)\}<\infty$. Then, by Lebesgue's Dominated Convergence Theorem,
\begin{equation} \label{eq:14}
\begin{split}
\int_{\mathcal{M}^e(X,G)}\mathcal{F}_*(\theta) dm(\theta) &=
\int_{\mathcal{M}^e(X,G)}\lim_{n\rightarrow\infty}\frac{1}{|F_n|}\int_X
f_{F_n}(x)\,\mathrm{d}\theta(x)\,\mathrm{d}m(\theta)\\
&=
\lim_{n\rightarrow\infty}\int_{\mathcal{M}^e(X,G)}\frac{1}{|F_n|}\int_X
f_{F_n}(x)\,\mathrm{d}\theta(x)\,\mathrm{d}m(\theta)\\
&= \lim_{n\rightarrow\infty}\frac{1}{|F_n|}\int_X
f_{F_n}(x)\,\mathrm{d}\mu(x)=\mathcal{F}_*(\mu).
 \end{split}
 \end{equation}
Combining Lemma 2.6, \eqref{eq:14} and \eqref{eq:main},
\begin{eqnarray*}
P(G,\mathcal {F};\mathcal{U})&=& h_\mu(G,\mathcal {U})+ \mathcal
 {F}_*(\mu)\\
&=&\int_{\mathcal{M}^e(X,G)}h_{\theta}(G,\mathcal
{U})\,\mathrm{d}m(\theta)+\int_{\mathcal{M}^e(X,G)}\mathcal
 {F}_*(\theta) \,\mathrm{d}m(\theta)\\
&=&\int_{\mathcal{M}^e(X,G)}\left(h_{\theta}(G,\mathcal
{U})+\mathcal
 {F}_*(\theta)\right) \,\mathrm{d}m(\theta).
\end{eqnarray*}
Hence there exists $\theta \in {\mathcal M}^e(X, G)$ such that
$$P(G,\mathcal {F};\mathcal{U})\leq h_{\theta}(G,\mathcal {U})+\mathcal
 {F}_*(\theta),$$
which is complete the proof of Theorem 1.1.  \hspace*{\fill}
$\square$

\section{An Example}

In this section, we main give a nontrivial example of sub-additive
potentials that satisfies conditions (C1), (C2) and (C3).

\begin{ex}
Let $(X,G)$ be a $G$-system and $M : X \rightarrow \mathbb{R}^{n
\times n}$ a nonnegative continuous matrix function of $X$, i.e.,
$M=\left(M_{i,j}\right)_{n\times n}$, where $M_{i,j}$ are
nonnegative continuous of $X$ for all $i,j=1,2,\cdots,n$.

Now we define $\mathcal{F}=\{f_E\}_{E \in {\mathcal F}(G)}$ as
follows: for each $x \in X$ and $E \in {\mathcal F}(G)$,
$$f_E(x):=\min_{1 \leq m \leq |E|} \min_{(g_i) \in E^m} \log \left\|\prod_{i=1}^m M(g_ix)\right\|,$$
where $\|M(x)\|=\sum_{i,j=1}^n M_{i,j}(x)$. We will show that
$\mathcal{F}$ satisfies conditions (C1), (C2) and (C3)

\vspace{2mm}

$\bullet$ $f_{Eg}(x)=f_E(gx)$ for all $x \in X$, $g \in G$ and $E
\in {\mathcal F}(G)$;

\vspace{2mm}

$\bullet$ $f_{E \cup F} \leqslant f_E + f_F$ for all $E, F \in
{\mathcal F}(G)$ with $E \cap F=\emptyset$;

\vspace{2mm}

$\bullet$ Given $E \in {\mathcal F}(G)$ and $g \notin E$. Then there
exists $1 \leq i \leq m \leq |E|$ such that
$$f_{E\cup \{g\}}(x)=\log\|M(g_1x)\cdots
M(g_ix)M(gx)M(g_{i+1}x) \cdots M(g_mx)\|.$$ Set $A=M(g_1x)\cdots
M(g_ix)$, $B=M(gx)$ and $C=M(g_{i+1}x)\cdots M(g_mx)$. Thus,
$$f_E(x)-f_{E\cup
\{g\}}(x)\leq \log \frac{\|AC\|}{\|ABC\|}.$$ Let
$$K_1=\frac{\min_{1\leq i,j\leq n}\min_{x\in
X}M_{i,j}(x)}{\max_{1\leq i,j\leq n}\max_{x\in X}M_{i,j}(x)},\ \ \
K_2=\min_{1\leq i,j\leq n}\min_{x\in X}M_{i,j}(x).$$ Then $K_1, K_2
\in (0, +\infty)$ and $M(x)-\frac{K_1}{n}EM(x)$ is a nonnegative
matrix, where $E=(E_{i,j}), E_{i,j}\equiv 1$. Hence,
\begin{eqnarray*}
\|ABC\|&\geq&\left\|A\frac{K_1}{n}EBC \right\|
=\frac{K_1}{n}\left\|A \right\|\|BC\|\\
&\geq&\left(\frac{K_1}{n}\right)^2\|A\|\|BEC\|
=\left(\frac{K_1}{n}\right)^2\|A\|\|B\|\|C\|\\
&\geq&\left(\frac{K_1}{n}\right)^2n^2K_2\|A\|\|C\|=
K_1^2K_2\|A\|\|C\|,
\end{eqnarray*}
which follows that
$$\frac{\|AC\|}{\|ABC\|}\leq \frac{1}{K_1^2K_2}<\infty.$$
\end{ex}
\noindent
\\
\\
{\bf Acknowledgement.} We thank Professor Xiangdong Ye and Professor
Wen Huang for useful discussion and suggestions over the topic.
Particularly, we thank Ye for the careful reading and valuable
comments which greatly improved the writing of the paper.

\noindent
\\

\end{document}